\pgfplotsset{compat=newest}
\newcounter{rowcntr}[table]
\renewcommand{\therowcntr}{\alph{rowcntr})}
\newcolumntype{N}{>{\refstepcounter{rowcntr}\therowcntr}c}
\newtheorem{theorem}{Theorem}
\newtheorem{example}{Example}
\newtheorem{corollary}[theorem]{Corollary}
\newtheorem{proposition}[theorem]{Proposition}
\newtheorem{remark}[theorem]{Remark}
\newcommand{\err}{\operatorname{err}}
\newcommand{\logLogSlopeTriangle}[6]
{

    \pgfplotsextra
    {
        \pgfkeysgetvalue{/pgfplots/xmin}{\xmin}
        \pgfkeysgetvalue{/pgfplots/xmax}{\xmax}
        \pgfkeysgetvalue{/pgfplots/ymin}{\ymin}
        \pgfkeysgetvalue{/pgfplots/ymax}{\ymax}

        \pgfmathsetmacro{\xArel}{#1}
        \pgfmathsetmacro{\yArel}{#3}
        \pgfmathsetmacro{\xBrel}{#1-#2}
        \pgfmathsetmacro{\yBrel}{\yArel}
        \pgfmathsetmacro{\xCrel}{\xArel}

        \pgfmathsetmacro{\lnxB}{\xmin*(1-(#1-#2))+\xmax*(#1-#2)} 
        \pgfmathsetmacro{\lnxA}{\xmin*(1-#1)+\xmax*#1} 
        \pgfmathsetmacro{\lnyA}{\ymin*(1-#3)+\ymax*#3} 
        \pgfmathsetmacro{\lnyC}{\lnyA+#4*(\lnxA-\lnxB)}
        \pgfmathsetmacro{\yCrel}{\lnyC-\ymin)/(\ymax-\ymin)} 

        \coordinate (A) at (rel axis cs:\xArel,\yArel);
        \coordinate (B) at (rel axis cs:\xBrel,\yCrel);
        \coordinate (C) at (rel axis cs:\xCrel,\yCrel);

        \draw[#5]   (A)--
                    (B)-- 
                    (C)-- node[pos=0.5,anchor=west] {#6}
                    cycle;
    }
}
\def\enorm#1{|\hspace*{-.5mm}|\hspace*{-.5mm}|#1|\hspace*{-.5mm}|\hspace*{-.5mm}|}
\newcommand{\ip}[2]{(#1\hspace*{.5mm},#2)}
\newcommand{\norm}[3][]{#1\|#2#1\|_{#3}}
\newcommand{\diam}{\mathrm{diam}}
\def\div{{\rm div\,}}
\DeclareMathOperator{\Div}{\mathbf{div}}
\def\Dev{{\mathbf{Dev}\,}}
\newcommand{\HDivset}[1]{\underbar\HH(\Div;#1)}
\newcommand{\HDivsetStar}[1]{\underbar\HH_*(\Div;#1)}
\newcommand{\di}{\mathrm{d}}
\newcommand{\set}[2]{\big\{#1\,:\,#2\big\}}
\newcommand{\RT}{\ensuremath{\mathcal{RT}}}
\newcommand{\R}{\ensuremath{\mathbb{R}}}
\newcommand{\N}{\ensuremath{\mathbb{N}}}
\newcommand{\HH}{\ensuremath{{\boldsymbol{H}}}}
\newcommand{\LL}{\ensuremath{\boldsymbol{L}}}
\newcommand{\LLL}{\underbar\LL}
\newcommand{\vv}{\ensuremath{\boldsymbol{v}}}
\newcommand{\TT}{\ensuremath{\mathcal{T}}}
\newcommand{\FF}{\ensuremath{\boldsymbol{F}}}
\newcommand{\cS}{\ensuremath{\mathcal{S}}}
\newcommand{\cSS}{\boldsymbol\cS}
\newcommand{\PP}{\ensuremath{\mathcal{P}}}
\newcommand{\OO}{\ensuremath{\mathcal{O}}}
\newcommand{\NN}{\ensuremath{\boldsymbol{N}}}
\newcommand{\zz}{\ensuremath{{\boldsymbol{z}}}}
\newcommand{\GG}{\ensuremath{\boldsymbol{G}}}
\newcommand{\ff}{\ensuremath{\boldsymbol{f}}}
\newcommand{\bg}{\ensuremath{\boldsymbol{g}}}
\newcommand\Grad{\boldsymbol\nabla}
\newcommand{\pphi}{{\boldsymbol\phi}}
\newcommand{\uu}{\boldsymbol{u}}
\newcounter{constantsnumber}
\def\setc#1{
  \ifthenelse{\equal{#1}{poinc}}{C_{\rm edge}}{ 
   \refstepcounter{constantsnumber}
   \label{const#1}C_{\theconstantsnumber}}}
\def\c#1{
  \ifthenelse{\equal{#1}{poinc}}{C_{\rm edge}}{ 
    C_{\ref{const#1}}}}
\newcommand{\MM}{\boldsymbol{M}}
\newcommand{\II}{\boldsymbol{I}}
\newcommand{\tr}{\operatorname{tr}}
\newcommand{\xx}{{\boldsymbol{x}}}
\begin{document}

\title[FOSLS for singularly perturbed Darcy equations]{First-order system least-squares finite element method for singularly perturbed Darcy equations}
\date{\today}

\author{Thomas F\"{u}hrer}
\address{Facultad de Matem\'{a}ticas, Pontificia Universidad Cat\'{o}lica de Chile, Santiago, Chile}
\email{tofuhrer@mat.uc.cl}

\author{Juha Videman}
\address{CAMGSD/Departamento de Matemática, Instituto Superior Técnico, Universidade de Lisboa,
1049-001 Lisbon, Portugal}
\email{jvideman@math.tecnico.ulisboa.pt}

\thanks{{\bf Acknowledgment.} 
This work was supported by FONDECYT project 1210391.}

\keywords{Least-squares finite element method, Brinkman equations, Darcy equations, singularly perturbed problem, first-order formulation}
\subjclass[2010]{65N30, 
                 65N12} 
\begin{abstract}
We define and analyse a least-squares finite element method for a first-order reformulation of a scaled Brinkman model of
fluid flow through porous media.
We introduce a pseudostress variable that allows to eliminate the pressure variable from the system. 
It can be recovered by a simple post-processing. It is shown that the least-squares functional is uniformly equivalent, i.e., independent of the singular perturbation parameter, 
to a parameter dependent norm. 
This norm equivalence implies that the least-squares functional evaluated in the discrete solution provides an efficient and reliable a posteriori error estimator.
Numerical experiments are presented.
\end{abstract}
\maketitle

\section{Introduction}

Let $\Omega \subset \R^d$ ($d=2,3$) be a bounded polytopal domain with boundary $\Gamma:=\partial\Omega$.
We consider the following set of equations: 
Given $\ff\in \LL^2(\Omega)$ find the velocity $\uu\in \HH^1(\Omega)$ and pressure $p\in L^2(\Omega)$ satisfying
\begin{subequations}\label{eq:model}
\begin{align}
  -t^2\boldsymbol\Delta \uu + \uu + \nabla p &=\ff \quad\text{in } \Omega, \\
  \div\uu &=0 \quad\text{in } \Omega, \\
  \int_\Omega p \,d\xx &=0, \\
  \uu|_\Gamma &= 0,
\end{align}
\end{subequations}
Throughout this article we assume that $t\in(0,1]$.
We note that for positive $t$ the model problem is of Stokes type and that for $t=0$ equations~\eqref{eq:model} describe the Darcy flow.
Problem~\eqref{eq:model} can thus be interpreted as perturbed Darcy equations where $t$ is the effective viscosity.
The set of equations~\eqref{eq:model} is obtained as a rescaled version of the Brinkman model of fluid flow through porous media.
For details on the derivation we refer to, e.g.,~\cite[Sec.~2]{HannukainenJuntunenStenberg13}.
As $t$ gets smaller, boundary layers develop so that robust numerical schemes are necessary. 

Several numerical methods have been developed for the Brinkman equations including mixed finite element methods, stabilized methods, HDG methods, virtual element methods, novel finite element spaces, see, e.g.,~\cite{JuntunenStenberg10,KonnoStenberg11,KonnoStenberg12,GuzmanNeilan12,BC09,BH07,VassilevskiVilla14,GaticaGaticaMarquez14,GaticaGaticaSequeira15,AnayaMoraOyarzuaRuizBaier16,CaceresGaticaSequeira17,Vacca18,GaticaSequeira18,BottiDiPietroDroniou18,MTW02}.

The main objective of this article is to develop and investigate a least-squares finite element method (LSFEM) for solving~\eqref{eq:model}.
Though LSFEMs for Stokes and Stokes-type problems are known, see, e.g.,~\cite{CaiLeeWang04} for the Stokes problem or~\cite{CaiChen16} for the Oseen equations, the situation is different for singularly perturbed problems like~\eqref{eq:model}. 
To the best of our knowledge the work at hand is the first work that provides an analysis of a first-order system least-squares finite element method for the Brinkman problem where all equivalence estimates are independent of the singular perturbation parameter $t\in(0,1]$.
In~\cite{DS07,MS11} LSFEMs for the (coupled) Stokes--Darcy flow are considered.

Some basic features of LSFEMs that motivate their study for problem~\eqref{eq:model} are: First, and, contrary to many mixed FEMs, any choice of discretization spaces is $\inf$--$\sup$ stable.
Second, the discretized set of equations lead to symmetric, positive definite matrices, thus, standard iterative solvers like the (preconditioned) conjugate gradient method can be applied. 
Third, LSFEMs usually come with a ``built-in'' reliable and efficient error estimator that can be used to steer an adaptive algorithm. 
For an overview on the general theory of LSFEMs we refer to the textbook~\cite{BochevGunzburgerLSQ}, and particular~\cite[Chapter~7]{BochevGunzburgerLSQ} for LSFEMs for the Stokes problem. 

To define our LSFEM we consider first a reformulation by introducing a pseudostress variable, see, e.g.,~\cite{CaiLeeWang04} for a pseudostress reformulation of the Stokes problem. 
This yields a first order formulation of problem~\eqref{eq:model} which also allows to eliminate the pressure variable from the system. 
We then define our least-squares functional based on the reduced formulation. 
The main contributions of this work are the analysis of a general form of the first-order reformulation and the norm equivalence between the least-squares functional and a canonical energy norm. 
We stress that a careful analysis is required to prove that the estimates are independent of the singular perturbation parameter $t$.
Concerning discretization spaces we are using Lagrange finite elements for the approximation of the velocity and Raviart--Thomas finite elements for the approximation of the pseudostress. This generic choice might lead to an undesired locking effect as demonstrated in Example~\ref{ex:convrates} below but can be removed by augmenting the discretization space for the pseudostress with deviatoric-free elements, see Example~\ref{ex:convrates2}.

\subsection{Overview}
The remainder of this work is given as follows:
In Section~\ref{sec:fo} we introduce notation and a first-order reformulation of our model problem~\eqref{eq:model}. 
Moreover, a reduced first-order formulation is given and analyzed in Theorem~\ref{thm:fo:reduced}. 
In Section~\ref{sec:lsq} we define the least-squares method and prove norm equivalence of the least-squares functional to a canonical parameter dependent norm.
The main results are presented in Theorem~\ref{thm:lsq:J} and Corollary~\ref{cor:lsq:J}. 
Approximation spaces are discussed in Section~\ref{sec:discretization} and Section~\ref{sec:num} concludes this article with several numerical experiments.

\section{First-order formulation}\label{sec:fo}

\subsection{Notation}
Vector valued quantities are denoted with boldfaced letters, matrix valued functions ($d\times d$ tensors) with
boldfaced capital letters.
Throughout $\II$ denotes the $d\times d$ identity matrix, $\tr : \R^{d\times d} \to \R$ is the trace operator of a
matrix. Furthermore, $\Dev(\cdot)$ denotes the deviatoric part of a matrix, i.e.,
\begin{align*}
  \Dev\MM = \MM - \frac1{d} \tr(\MM)\II.
\end{align*}

We use the common notation for Lebesgue and Sobolev spaces, $L^2(\Omega)$, $H^1(\Omega)$.
The space $H_0^1(\Omega)$ is the space of $H^1(\Omega)$ functions with zero trace on $\Gamma$.
The canonical $L^2(\Omega)$ inner product is denoted by $\ip\cdot\cdot$ and the induced norm by $\norm\cdot{}$. 
Additionally, we define
\begin{align*}
  L_*^2(\Omega) := \set{v\in L^2(\Omega)}{\ip{v}1 = 0}.
\end{align*}
We use boldfaced
symbols for product spaces like
\begin{align*}
  \LL^2(\Omega) := L^2(\Omega)^d, \quad  \HH^1(\Omega) := H^1(\Omega)^d, 
  \quad \HH_0^1(\Omega) := H_0^1(\Omega)^d.
\end{align*}

Spaces for matrix valued functions are denoted by boldfaced capital letters with an underscore, e.g.,
\begin{align*}
  \LLL^2(\Omega) := L^2(\Omega)^{d\times d}.
\end{align*}
Furthermore, 
\begin{align*}
  \HDivset\Omega :=\set{\MM\in \LLL^2(\Omega)}{\Div\MM\in \LL^2(\Omega)},
\end{align*}
where $\Div$ denotes the row-wise divergence operator.
We also use the divergence operator $\div$, the gradient $\nabla$ and the vector gradient $\Grad$ which is defined by
applying $\nabla$ to each element, e.g., for $d=2$ we have that
\begin{align*}
  \Grad\uu = \Grad\begin{pmatrix}\uu_1 \\ \uu_2\end{pmatrix}
  = \begin{pmatrix}
    \nabla\uu_1^\top \\
    \nabla\uu_2^\top
  \end{pmatrix}
  = \begin{pmatrix}
    \frac{\partial \uu_1}{\partial \xx_1} & \frac{\partial \uu_1}{\partial \xx_2} \\
    \frac{\partial \uu_2}{\partial \xx_1} & \frac{\partial \uu_2}{\partial \xx_2}
  \end{pmatrix}.
\end{align*}
The vector Laplacian $\boldsymbol\Delta$ is defined as applying $\Delta$ to each component, i.e.,
$(\boldsymbol\Delta\uu)_j = \Delta\uu_j$ for $j=1,\dots,d$.

For the inner products and norms in $\LL^2(\Omega)$ and $\LLL^2(\Omega)$ we use the same notations as in the scalar
case, i.e., for $\uu,\vv\in\LL^2(\Omega)$, $\MM,\NN\in \LLL^2(\Omega)$ we have that
\begin{align*}
  \ip{\uu}{\vv} &:= \int_\Omega \uu\cdot\vv\,d\xx = \sum_{j=1}^d \ip{\uu_j}{\vv_j}, \quad \norm{\uu}{}^2 := \ip{\uu}{\uu}, \\
  \ip{\MM}{\NN} &:= \int_\Omega \MM : \NN \,d\xx = \sum_{j,k=1}^d \ip{\MM_{jk}}{\NN_{jk}}, \quad
  \norm{\MM}{}^2 := \ip{\MM}{\MM}.
\end{align*}

Basic manipulations show
\begin{align*}
  \norm{\MM}{}^2 = \norm{\Dev\MM}{}^2 + \frac1{d} \norm{\tr\MM}{}^2,
\end{align*}
and thus $\norm{\tr\MM}{} \lesssim \norm{\MM}{}$ and $\norm{\Dev\MM}{}\leq \norm{\MM}{}$.

For $T,S\geq 0$ we write $T\lesssim S$ if there exists $C>0$ such that $T\leq C\, S$. If $T\lesssim S$ and $S\lesssim T$ then we write $T\eqsim S$.

\subsection{Pseudostress formulation}
To obtain a first-order reformulation we consider the pseudostress
\begin{align*}
  \MM := t \Grad\uu - \frac{p}t\II.
\end{align*}
Here, we choose to work with the pseudostress instead of the symmetric stress (replacing $\Grad\uu$ with its symmetric
part) because it allows for a simple algebraic postprocessing to obtain other physical meaningful quantities like the
vorticity. This was elaborated in, e.g.,~\cite{CaiLeeWang04}.

The first-order reformulation of~\eqref{eq:model} is then given by
\begin{subequations}\label{eq:fo}
\begin{align}
  -t\Div\MM + \uu &= \ff, \label{eq:fo:a}\\
  \MM-t\Grad\uu+\frac{p}t\II &= 0, \label{eq:fo:b}\\
  \div\uu &= 0, \label{eq:fo:c}\\
  \int_\Omega p \,d\xx &= 0, \\
  \uu|_\Gamma &=0.
\end{align}
\end{subequations}

\subsection{Analysis of the general first-order system}
For the analysis of the first-order system we work with the parameter-dependent norm
\begin{align*}
  \norm{\uu}t^2 := \norm{\uu}{}^2 + t^2\norm{\Grad\uu}{}^2 + \norm{\div\uu}{}^2
\end{align*}
for the velocity variable, the canonical $L^2(\Omega)$ norm for the pressure variable, and the parameter-dependent norm
\begin{align*}
  \norm{\MM}t^2 := \norm{\Dev\MM}{}^2 + t^2\norm{\Div\MM}{}^2 + t^2\norm{\tr\MM}{}^2
\end{align*}
for the pseudostress variable.

We equip the space $X := \HH_0^1(\Omega)\times \HDivset\Omega$ with the parameter-dependent norm
\begin{align*}
  \enorm{(\uu,\MM)}_t^2 := \norm{\uu}t^2 + \norm{\MM}t^2.
\end{align*}
Note that the product space 
\begin{align*}
  X_* := \HH_0^1(\Omega)\times \HDivsetStar\Omega := \HH_0^1(\Omega) \times (\HDivset\Omega\cap \LLL_*^2(\Omega)).
\end{align*}
where $\LLL_*^2(\Omega) := \set{\NN\in\LLL^2(\Omega)}{\int_\Omega \tr(\NN)\,\di\xx=0}$, 
is a closed subspace with respect to the norm $\enorm\cdot_t$.
For our analysis we also consider a general system including the pressure variable where we need the space
\begin{align*}
  Y_*&:= \HH_0^1(\Omega)\times \HDivset\Omega \times L_*^2(\Omega).
\end{align*}

The next result follows from the well-known continuous $\inf$--$\sup$ condition for the Stokes problem:
\begin{proposition}\label{prop:bb}
  There exists a constant $C>0$ which depends only on $\Omega$ such that for all $t\in (0,1]$
  \begin{align*}
    C \norm{p}{} \leq \sup_{\uu\in \HH_0^1(\Omega)\setminus\{0\}} \frac{\ip{p}{\div\uu}}{\norm{\uu}t} 
    \quad\text{for all } p\in L_*^2(\Omega).
  \end{align*}
\end{proposition}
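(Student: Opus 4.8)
The plan is to reduce the assertion to the classical continuous $\inf$--$\sup$ (Ladyzhenskaya--Babu\v{s}ka--Brezzi) condition for the Stokes problem and then to control the passage from the $H^1$-seminorm to the parameter-dependent norm $\norm{\uu}t$ in a way that is uniform in $t$. Recall that there is a constant $\beta>0$, depending only on $\Omega$, such that
\begin{align*}
  \beta\,\norm{p}{} \leq \sup_{\uu\in\HH_0^1(\Omega)\setminus\{0\}} \frac{\ip{p}{\div\uu}}{\norm{\Grad\uu}{}}
  \quad\text{for all } p\in L_*^2(\Omega),
\end{align*}
where the $H^1$-seminorm in the denominator is equivalent to the full $H^1$-norm on $\HH_0^1(\Omega)$ by the Poincar\'e--Friedrichs inequality. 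The only gap between this and the claimed estimate is the denominator: one must replace $\norm{\Grad\uu}{}$ by $\norm{\uu}t$ without letting the constant depend on $t$.

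The key observation is that $\norm{\uu}t \lesssim \norm{\Grad\uu}{}$ holds \emph{uniformly} for all $t\in(0,1]$. Writing out
\begin{align*}
  \norm{\uu}t^2 = \norm{\uu}{}^2 + t^2\norm{\Grad\uu}{}^2 + \norm{\div\uu}{}^2,
\end{align*}
I would bound each summand by $\norm{\Grad\uu}{}^2$: the first by Poincar\'e--Friedrichs on $\HH_0^1(\Omega)$, the second by $t^2\leq 1$, and the third by noting $\div\uu = \tr(\Grad\uu)$ together with the inequality $\norm{\tr\MM}{}\lesssim\norm{\MM}{}$ recorded above. This produces a constant $C'>0$, independent of $t$, with $\norm{\uu}t \leq C'\norm{\Grad\uu}{}$ for every $\uu\in\HH_0^1(\Omega)$.

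Combining the two ingredients then finishes the argument. Since $\norm{\uu}t \leq C'\norm{\Grad\uu}{}$ pointwise in $\uu$, each quotient with $\norm{\uu}t$ in the denominator dominates $1/C'$ times the corresponding Stokes quotient, and taking suprema (which are unchanged under $\uu\mapsto-\uu$) yields
\begin{align*}
  \sup_{\uu\in\HH_0^1(\Omega)\setminus\{0\}} \frac{\ip{p}{\div\uu}}{\norm{\uu}t}
  \geq \frac1{C'}\sup_{\uu\in\HH_0^1(\Omega)\setminus\{0\}} \frac{\ip{p}{\div\uu}}{\norm{\Grad\uu}{}}
  \geq \frac{\beta}{C'}\,\norm{p}{},
\end{align*}
so that $C:=\beta/C'$ does the job. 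The only point demanding care is precisely the uniformity in $t$: it is the restriction $t\in(0,1]$ that keeps the coefficient of the gradient term bounded and hence keeps $C'$, and therefore $C$, independent of the perturbation parameter. Allowing $t>1$ would break this comparison and force a genuinely $t$-dependent constant, which is why the hypothesis $t\in(0,1]$ is used here rather than being merely a normalization.
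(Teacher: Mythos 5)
Your proposal is correct and follows essentially the same route as the paper: both reduce the claim to the classical Stokes inf--sup condition and then compare the denominator $\norm{\uu}t$ with the classical norm uniformly in $t\in(0,1]$, using $t\leq 1$ and the bound $\norm{\div\uu}{}\lesssim\norm{\Grad\uu}{}$. The only cosmetic difference is that you start from the seminorm version of the inf--sup condition and invoke Poincar\'e--Friedrichs explicitly, whereas the paper starts from the version with $\norm{\uu}{}+\norm{\Grad\uu}{}$ in the denominator and absorbs this into the equivalence $\norm{\uu}{}+\norm{\Grad\uu}{}\eqsim\norm{\uu}1$; your handling of the sign issue via $\uu\mapsto-\uu$ is a correct detail the paper leaves implicit.
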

\begin{proof}
  It is well-known, see e.g.~\cite[Ch.~8, Sec.~2]{BoffiBrezziFortin}, that
  \begin{align*}
    \sup_{\uu\in \HH_0^1(\Omega)\setminus\{0\}} \frac{\ip{p}{\div\uu}}{\norm{\uu}{}+\norm{\Grad\uu}{}} \gtrsim
    \norm{p}{}.
  \end{align*}
  Together with the norm equivalence $\norm{\uu}{}+\norm{\Grad\uu}{}\simeq \norm{\uu}1$ and the estimate
  $\norm{\uu}t\leq \norm{\uu}1$ uniformly in $t\in(0,1]$, this finishes the proof.
\end{proof}

In the next result we analyze a general form of~\eqref{eq:fo}.
\begin{theorem}\label{thm:fo}
  Let $(\ff,\FF,f)\in \LL^2(\Omega)\times \LLL^2(\Omega) \times L_*^2(\Omega)$ be given.
  The problem
  \begin{subequations}\label{eq:fo:general}
  \begin{align}
    -t\Div\MM + \uu &= \ff, \label{eq:fo:general:a} \\
    \MM-t\Grad\uu+\frac{p}t\II &= \FF, \label{eq:fo:general:b} \\
    \div\uu &= f. \label{eq:fo:general:c}
  \end{align}
  \end{subequations}
  admits a unique solution $(\uu,\MM,p)\in Y_*$.
  Moreover, there exists a constant $C>0$ independent of $t\in(0,1]$ such that
  \begin{align*}
    \enorm{(u,\MM)}_t+\norm{p}{} \leq C (\norm{\ff}{} + \norm{\Dev\FF}{} + t\norm{\tr\FF}{} + \norm{f}{}).
  \end{align*}
\end{theorem}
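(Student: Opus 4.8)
The plan is to eliminate the pseudostress $\MM$, thereby reducing~\eqref{eq:fo:general} to a weak Brinkman problem for the pair $(\uu,p)$, to solve the latter by the standard saddle-point (Brezzi) theory using the uniform inf–sup condition of Proposition~\ref{prop:bb}, and finally to recover $\MM$ from~\eqref{eq:fo:general:b} while controlling $\norm{\MM}t$. Formally, inserting~\eqref{eq:fo:general:b} into~\eqref{eq:fo:general:a} and using $\Div(p\II)=\nabla p$ and $\Div\Grad\uu=\boldsymbol\Delta\uu$ yields $-t^2\boldsymbol\Delta\uu+\uu+\nabla p=\ff-t\Div\FF$. Since $\FF$ lies only in $\LLL^2(\Omega)$, I would avoid $\Div\FF$ and instead pose the weak problem: find $(\uu,p)\in \HH_0^1(\Omega)\times L_*^2(\Omega)$ with
\begin{align*}
  t^2\ip{\Grad\uu}{\Grad\vv} + \ip{\uu}{\vv} - \ip{p}{\div\vv} &= \ip{\ff}{\vv} - t\ip{\FF}{\Grad\vv} \quad (\vv\in \HH_0^1(\Omega)),\\
  \ip{\div\uu}{q} &= \ip{f}{q} \quad (q\in L_*^2(\Omega)).
\end{align*}

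The bilinear form $a(\uu,\vv):=t^2\ip{\Grad\uu}{\Grad\vv}+\ip{\uu}{\vv}$ is bounded by $\norm{\uu}t\norm{\vv}t$ and satisfies $a(\uu,\uu)=\norm{\uu}t^2$ on the kernel $\{\div\uu=0\}$, so it is coercive there with constant $1$, uniformly in $t$; the required constraint inf–sup is exactly Proposition~\ref{prop:bb}. Brezzi's theorem then gives a unique $(\uu,p)$ with $\norm{\uu}t+\norm{p}{}\lesssim \norm{\ell}{}+\norm{f}{}$, where $\ell(\vv)=\ip{\ff}{\vv}-t\ip{\FF}{\Grad\vv}$, with all constants independent of $t$. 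The decisive point for the weighted right-hand side is to estimate $\ell$ sharply: splitting $\FF=\Dev\FF+\tfrac1d\tr(\FF)\II$ and using that $\Dev\FF$ is trace-free gives $\ip{\FF}{\Grad\vv}=\ip{\Dev\FF}{\Grad\vv}+\tfrac1d\ip{\tr\FF}{\div\vv}$, whence
\[
  t\,|\ip{\FF}{\Grad\vv}| \leq \norm{\Dev\FF}{}\,t\norm{\Grad\vv}{} + t\norm{\tr\FF}{}\,\norm{\div\vv}{} \leq (\norm{\Dev\FF}{}+t\norm{\tr\FF}{})\,\norm{\vv}t,
\]
since $t\norm{\Grad\vv}{}\leq\norm{\vv}t$ and $\norm{\div\vv}{}\leq\norm{\vv}t$. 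Combined with $|\ip{\ff}{\vv}|\leq\norm{\ff}{}\norm{\vv}t$ this produces precisely $\norm{\ell}{}\lesssim \norm{\ff}{}+\norm{\Dev\FF}{}+t\norm{\tr\FF}{}$.

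It then remains to recover $\MM$ and bound $\norm{\MM}t$. I would \emph{define} $\MM:=t\Grad\uu-\tfrac{p}t\II+\FF$, so that~\eqref{eq:fo:general:b} holds by construction. Rewriting the left-hand side of the first weak equation as $t\ip{\MM}{\Grad\vv}+\ip{\uu}{\vv}$ shows $t\ip{\MM}{\Grad\vv}=\ip{\ff-\uu}{\vv}$ for all $\vv\in \HH_0^1(\Omega)$; this is the distributional statement $t\Div\MM=\uu-\ff$, so $\Div\MM=(\uu-\ff)/t\in\LL^2(\Omega)$, proving $\MM\in\HDivset\Omega$ and recovering~\eqref{eq:fo:general:a}. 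For the norm I would use $\Dev\II=0$ to get $\norm{\Dev\MM}{}=\norm{t\Dev\Grad\uu+\Dev\FF}{}\leq\norm{\uu}t+\norm{\Dev\FF}{}$; from~\eqref{eq:fo:general:a}, $t\norm{\Div\MM}{}=\norm{\uu-\ff}{}\leq\norm{\uu}t+\norm{\ff}{}$; and since $t\,\tr\MM=t^2\div\uu-dp+t\,\tr\FF$, also $t\norm{\tr\MM}{}\lesssim\norm{\uu}t+\norm{p}{}+t\norm{\tr\FF}{}$. Summing these with the bound on $(\uu,p)$ yields the claimed estimate, and uniqueness follows from it by linearity.

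The main obstacle is the interplay between the low regularity of $\FF$ and the uniform tracking of the $t$-weights: the weak formulation and the distributional recovery of $\Div\MM$ are forced by $\FF\in\LLL^2(\Omega)$ (so that $\Div\FF$ is unavailable), while the deviatoric/trace splitting is what produces exactly $\norm{\Dev\FF}{}+t\norm{\tr\FF}{}$ rather than the full norm $\norm{\FF}{}$. Robustness in $t$ ultimately rests on the uniform inf–sup of Proposition~\ref{prop:bb} and on the coercivity constant being identically $1$ in the norm $\norm\cdot t$.
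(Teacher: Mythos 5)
Your proof follows the paper's own argument in all essentials: reduce \eqref{eq:fo:general} to the velocity--pressure saddle point problem, obtain $t$-uniform stability from the inf--sup condition of Proposition~\ref{prop:bb} via Brezzi theory, estimate the $\FF$-term through the splitting $\FF=\Dev\FF+d^{-1}\tr(\FF)\II$ together with $\II:\Grad\vv=\div\vv$, then define $\MM:=t\Grad\uu-\tfrac{p}{t}\II+\FF$ and read off \eqref{eq:fo:general:a} distributionally. The only methodological difference is that you keep the unaugmented form $a(\uu,\vv)=t^2\ip{\Grad\uu}{\Grad\vv}+\ip{\uu}{\vv}$, which is coercive (with constant $1$) only on the divergence-free kernel, whereas the paper additionally tests $\div\uu=f$ with $\div\vv$ and works with $a_t(\uu,\vv)=a(\uu,\vv)+\ip{\div\uu}{\div\vv}$, which is exactly the inner product inducing $\norm{\cdot}t$ and hence coercive on all of $\HH_0^1(\Omega)$; both variants yield constants independent of $t$, and your insistence on writing $-t\ip{\FF}{\Grad\vv}$ instead of $t\ip{\Div\FF}{\vv}$ is, if anything, the cleaner reading when $\FF\in\LLL^2(\Omega)$ only. (Minor point: in your formal elimination the right-hand side should be $\ff+t\Div\FF$, not $\ff-t\Div\FF$; this is harmless, since the weak functional you actually use, $\ip{\ff}{\vv}-t\ip{\FF}{\Grad\vv}$, is the correct one.)

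The one place that needs repair is uniqueness. You write that it ``follows from it by linearity'', but the a priori estimate was derived only for the particular solution you constructed from the weak formulation; applying it to an arbitrary solution of \eqref{eq:fo:general} presupposes that every solution arises in this way, which is precisely what uniqueness asserts --- as written, the argument is circular. The fix is short and uses only tools you already have: if $(\uu,\MM,p)\in Y_*$ solves \eqref{eq:fo:general} with zero data, test \eqref{eq:fo:general:a} with $\vv\in\HH_0^1(\Omega)$, integrate by parts, and substitute \eqref{eq:fo:general:b} to conclude that $(\uu,p)$ solves the homogeneous weak problem (this is your recovery computation read backwards); uniqueness for the saddle point problem then gives $(\uu,p)=(0,0)$, and \eqref{eq:fo:general:b} forces $\MM=0$. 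This reversal is exactly Step~5 of the paper's proof, and with it your argument is complete.
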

\begin{proof}
  We split the proof into several steps.
  
  \noindent
  \textbf{Step 1.}
  We consider problem~\eqref{eq:model} with general data
  \begin{align*}
    -t^2\boldsymbol\Delta \uu + \uu +\nabla p &= \ff + t\Div\FF, \\
    \div\uu &= f
  \end{align*}
  which admits a unique weak solution $(\uu,p)\in \HH_0^1(\Omega)\times L_*^2(\Omega)$, i.e., $(\uu,p)$ is the solution of
  \begin{align}\label{eq:fo:proof:a}
    \begin{split}
    &t^2\ip{\Grad\uu}{\Grad\vv} + \ip{\uu}{\vv} -\ip{p}{\div\vv} -\ip{q}{\div\uu} 
    \\&\qquad\qquad= \ip{\ff}{\vv} + t\ip{\Div\FF}{\vv}
    -\ip{f}q
  \end{split}
  \end{align}
  for all $(\vv,p)\in \HH_0^1(\Omega)\times L_*^2(\Omega)$.
  For our analysis it is more convenient to use an equivalent weak formulation that is obtained by testing the second
  equation $\div\uu=f$ with $\div\vv$ which leads to
  \begin{align}\label{eq:fo:proof:b}
    \begin{split}
    &a_t(\uu,\vv) -\ip{p}{\div\vv} -\ip{q}{\div\uu} 
    \\&\qquad\qquad= \ip{\ff}{\vv} + t\ip{\Div\FF}{\vv}
    -\ip{f}q + \ip{f}{\div\vv}
  \end{split}
  \end{align}
  for all $(\vv,p)\in \HH_0^1(\Omega)\times L_*^2(\Omega)$. Here, the bilinear form $a_t(\cdot,\cdot)$ is defined by
  \begin{align*}
    a_t(\uu,\vv) := t^2\ip{\Grad\uu}{\Grad\vv} + \ip{\uu}\vv + \ip{\div\uu}{\div\vv}
    \quad\text{for all }\uu,\vv\in \HH_0^1(\Omega).
  \end{align*}
  To see that~\eqref{eq:fo:proof:a} and~\eqref{eq:fo:proof:b} are equivalent we test either equation with $(0,q)$ to get
  that $\div\uu = f$. This shows that any solution $(\uu,p)$ of~\eqref{eq:fo:proof:a} solves~\eqref{eq:fo:proof:b} and
  vice versa.

  \noindent
  \textbf{Step 2.}
  We analyse the dependence of the unique solution $(\uu,p)$ of~\eqref{eq:fo:proof:b} on the data:
  First, note that $a_t(\cdot,\cdot)$ is the inner product that induces the norm $\norm{\cdot}t$.
  Second,
  \begin{align*}
    |\ip{q}{\div\vv}| \leq \norm{q}{}\norm{\vv}t.
  \end{align*}
  Together with Proposition~\ref{prop:bb} the well-known Babu\v{s}ka--Brezzi theory shows that the unique solution of
  problem~\eqref{eq:fo:proof:b} satisfies
  \begin{align*}
    \norm{\uu}t + \norm{p}{} &\lesssim \sup_{0\neq(\vv,q)\in \HH_0^1(\Omega)\times L_*^2(\Omega)} 
    \frac{\ip{\ff}{\vv} + t\ip{\Div\FF}{\vv}-\ip{f}q + \ip{f}{\div\vv}}{\norm{\vv}t+\norm{q}{}} \\
    &\lesssim \norm{\ff}{} + \norm{f}{} + \sup_{0\neq\vv\in \HH_0^1(\Omega)} 
    \frac{t\ip{\Div\FF}{\vv}}{\norm{\vv}t}.
  \end{align*}
  To estimate the last term we use integration by parts and then split $\FF$ into its deviatoric and trace parts
  respectively, i.e., $\FF = \Dev\FF + d^{-1}\tr(\FF)\II$. This and the fact that $\II:\Grad\vv = \div\vv$ leads to
  \begin{align*}
    t|\ip{\Div\FF}{\vv}| &= t|\ip{\FF}{\Grad\vv}| \leq t|\ip{\Dev\FF}{\Grad\vv}| 
    + t|\ip{d^{-1}\tr(\FF)\II}{\Grad\vv}| \\
    &= t|\ip{\Dev\FF}{\Grad\vv}| 
    + t|\ip{d^{-1}\tr{\FF}}{\div\vv}| 
    \\
    &\leq \norm{\Dev\FF}{}t\norm{\Grad\vv}{} + t\norm{\tr\FF}{}\norm{\div\vv}{}
    \\
    &\leq \left(\norm{\Dev\FF}{}^2+t^2\norm{\tr\FF}{}^2\right)^{1/2}\norm{\vv}t.
  \end{align*}
  Putting all estimates together shows that
  \begin{align*}
    \norm{\uu}t + \norm{p}{} \lesssim \norm{\ff}{} + \norm{f}{} + \norm{\Dev\FF}{}+t\norm{\tr\FF}{}.
  \end{align*}

  \noindent
  \textbf{Step 3.}
  Let $(\uu,p)\in \HH_0^1(\Omega)\times L_*^2(\Omega)$ be the solution of~\eqref{eq:fo:proof:b}.
  Define $\MM$ by relation~\eqref{eq:fo:general:b}, i.e.,
  \begin{align*}
    \MM:= t\Grad\uu - \frac{p}t\II +\FF.
  \end{align*}
  Then, $\MM\in \LLL^2(\Omega)$ and it remains to show that $\Div\MM\in \LL^2(\Omega)$ and that $\MM$
  satisfies~\eqref{eq:fo:general:a}:
  With $\pphi\in \HH_0^1(\Omega)$ we get using~\eqref{eq:fo:proof:a}, $\div\uu=f$, and integration by parts that
  \begin{align*}
    -t\ip{\Div\MM}{\pphi} &= t\ip{\MM}{\Grad\pphi} = t^2\ip{\Grad\uu}{\Grad\pphi} - \ip{p}{\div\pphi} +
    t\ip{\FF}{\Grad\pphi} 
    \\&= \ip{-t^2\boldsymbol{\Delta}\uu+\nabla p-t\Div\FF}{\pphi} = \ip{\ff-\uu}{\pphi}.
  \end{align*}
  This proves that $\div\MM = t^{-1}(\uu-\ff)\in \LL^2(\Omega)$ as well as $\MM$ satisfies~\eqref{eq:fo:general:a}. Thus, $(\uu,\MM,p)\in Y_*$ is a
  solution of the first-order system~\eqref{eq:fo:general}.

  \noindent
  \textbf{Step 4.}
  Let $(\uu,\MM,p)\in Y_*$ be given as in Step~3.
  Then,
  \begin{align*}
    \norm{\Dev\MM}{} \leq t\norm{\Dev\Grad\uu}{} + \norm{\Dev\FF}{} \lesssim \norm{\uu}t + \norm{\Dev\FF}{},
  \end{align*}
  and
  \begin{align*}
    t\norm{\tr\MM}{} \lesssim t^2\norm{\div\uu}{} + \norm{p}{} + t\norm{\tr\FF}.
  \end{align*}
  Together with the final estimate from Step~2 this shows that
  \begin{align*}
    \enorm{(\uu,\MM)}_t + \norm{p}{}\lesssim \norm{\ff}{} + \norm{\Dev\FF}{} 
    +t\norm{\tr\FF}{} + \norm{f}{}.
  \end{align*}

  \noindent
  \textbf{Step 5.}
  It remains to show that solutions of~\eqref{eq:fo:general} are unique:
  Suppose that $(\uu,\MM,p)\in Y_*$ solves~\eqref{eq:fo:general} with $(\ff,\FF,f) = 0$.
  Then, using~\eqref{eq:fo:general:a} and~\eqref{eq:fo:general:b} we obtain that
  \begin{align*}
    -t\ip{\Div\MM}{\vv} + \ip{\uu}\vv &= t\ip{\MM}{\Grad\vv}+\ip{\uu}\vv 
    \\
    &= t^2\ip{\Grad\uu}{\Grad\vv} +\ip{\uu}\vv
    -\ip{p}{\div\uu} = 0
  \end{align*}
  for all $\vv\in\HH_0^1(\Omega)$.
  Together with~\eqref{eq:fo:general:c} this shows that $(\uu,p)$ solves~\eqref{eq:fo:proof:a} with vanishing right-hand
  side. Consequently, $(\uu,p) = 0$. Finally,~\eqref{eq:fo:general:b} implies that $\MM=0$ which finishes the proof.
\end{proof}

The last result indicates to consider the least-squares functional
\begin{align*}
  &\norm{-t\Div\MM+\uu-\ff}{}^2 \!+\! \norm{\Dev(\MM-t\Grad\uu)}{}^2 
  \\
  &\qquad
  \!+\! \norm{t\tr\MM-t^2\div\uu+dp}{}^2 \!+\! \norm{\div\uu}{}^2
\end{align*}
However, we can also eliminate the pressure from the system~\eqref{eq:fo:general} and obtain a reduced first-order formulation.
Consider the solution $(\uu,\MM,p)$ from~\eqref{eq:fo:general} with $(\ff,\FF,f) = (\ff,0,0)$. Taking the trace of~\eqref{eq:fo:general:b} and using that $\div\uu = 0$ we infer that
\begin{align*}
  \tr\MM -t\div\uu +d t^{-1}p = \tr\MM+ dt^{-1}p = 0.
\end{align*}
Moreover, condition $\int_\Omega p\,\di\xx = 0$ corresponds to $\int_\Omega \tr\MM \,\di\xx =0$. Thus, the resulting problem is: Find $(\uu,\MM) \in X_*$ such that
\begin{subequations}\label{eq:fo:reduced}
\begin{align}
  -t\Div\MM + \uu &= \ff, \\
  \Dev\MM -t\Grad\uu &= 0.
\end{align}
\end{subequations}
We analyze the general form of~\eqref{eq:fo:reduced} in the next result.
\begin{theorem}\label{thm:fo:reduced}
  Let $(\bg,\GG)\in \LL^2(\Omega)\times \LLL^2_*(\Omega)$ be given. The system 
  \begin{subequations}\label{eq:fo:general:reduced}
  \begin{align}
    -t\Div\MM + \uu &= \bg,\label{eq:fo:general:reduced:a} \\
    \Dev\MM-t\Grad \uu &= \GG \label{eq:fo:general:reduced:b}
  \end{align}
  \end{subequations}
  admits a unique solution $(\uu,\MM)\in\HH_0^1(\Omega)\times \HDivsetStar\Omega$. 

  Furthermore, there exists a constant $C>0$ independent of $t\in(0,1]$ such that 
  \begin{align}
    \enorm{(\uu,\MM)}_t \leq C(\norm{\bg}{} + \norm{\Dev\GG}{} + t^{-1}\norm{\tr\GG}{}).
  \end{align}
\end{theorem}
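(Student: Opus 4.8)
The plan is to reduce everything to Theorem~\ref{thm:fo}, which already settles well-posedness and the parameter-robust bound for the full three-field system. The key observation is that the divergence of $\uu$ is not free in~\eqref{eq:fo:general:reduced} but is fixed by the data: taking the trace of~\eqref{eq:fo:general:reduced:b} and using $\tr(\Dev\MM)=0$ together with $\tr(\Grad\uu)=\div\uu$ gives $-t\div\uu=\tr\GG$, i.e.\ $\div\uu=-t^{-1}\tr\GG$. This motivates invoking Theorem~\ref{thm:fo} with the data choice $\ff:=\bg$, $\FF:=\Dev\GG$ and $f:=-t^{-1}\tr\GG$. Note that $f\in L_*^2(\Omega)$ precisely because $\GG\in\LLL_*^2(\Omega)$ guarantees $\int_\Omega\tr\GG\,\di\xx=0$, so the hypotheses of Theorem~\ref{thm:fo} are met.

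For existence and the estimate we feed this data into Theorem~\ref{thm:fo}, obtaining a unique $(\uu,\MM,p)\in Y_*$ with
\begin{align*}
  \enorm{(\uu,\MM)}_t + \norm{p}{} \leq C\big(\norm{\bg}{} + \norm{\Dev(\Dev\GG)}{} + t\norm{\tr(\Dev\GG)}{} + t^{-1}\norm{\tr\GG}{}\big).
\end{align*}
Since $\Dev$ is a projection, $\Dev(\Dev\GG)=\Dev\GG$, and $\tr(\Dev\GG)=0$ kills the third term, which reproduces the claimed bound after discarding the nonnegative quantity $\norm{p}{}$.

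The main step is to verify that this $(\uu,\MM)$ actually solves the reduced system and lies in the correct space. Equation~\eqref{eq:fo:general:reduced:a} coincides with~\eqref{eq:fo:general:a} for $\ff=\bg$, so it holds verbatim. For~\eqref{eq:fo:general:reduced:b} we apply $\Dev$ to~\eqref{eq:fo:general:b}; using $\Dev\II=0$, $\Dev(\Dev\GG)=\Dev\GG$ and $\Dev(\Grad\uu)=\Grad\uu-\tfrac{1}{d}\div\uu\,\II$ we obtain
\begin{align*}
  \Dev\MM - t\Grad\uu = \Dev\GG - \tfrac{t}{d}\div\uu\,\II .
\end{align*}
Substituting the prescribed divergence $\div\uu=-t^{-1}\tr\GG$ turns the last term into $\tfrac{1}{d}\tr\GG\,\II$, and the decomposition $\GG=\Dev\GG+\tfrac{1}{d}\tr\GG\,\II$ yields exactly $\Dev\MM-t\Grad\uu=\GG$. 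It remains to confirm $\MM\in\HDivsetStar\Omega$: from the construction in Theorem~\ref{thm:fo} one has $\tr\MM=t\div\uu-\tfrac{d}{t}p+\tr\FF$ with $\tr\FF=0$, and integrating, $\int_\Omega\div\uu\,\di\xx=0$ (since $\uu\in\HH_0^1(\Omega)$) together with $p\in L_*^2(\Omega)$ forces $\int_\Omega\tr\MM\,\di\xx=0$.

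Finally, for uniqueness we argue in reverse. Suppose $(\uu,\MM)\in X_*$ solves~\eqref{eq:fo:general:reduced} with $(\bg,\GG)=0$. Taking the trace of~\eqref{eq:fo:general:reduced:b} gives $\div\uu=0$, and we set $p:=-\tfrac{t}{d}\tr\MM$, which lies in $L_*^2(\Omega)$ because $\MM\in\HDivsetStar\Omega$. Using $\Dev\MM=t\Grad\uu$ and $\MM=\Dev\MM+\tfrac{1}{d}\tr\MM\,\II$ one checks that $\MM-t\Grad\uu+\tfrac{p}{t}\II=0$, so $(\uu,\MM,p)\in Y_*$ solves the homogeneous full system~\eqref{eq:fo:general}; the uniqueness part of Theorem~\ref{thm:fo} then forces $(\uu,\MM,p)=0$, in particular $(\uu,\MM)=0$. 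I expect the deviatoric/trace bookkeeping in the third step to be the only delicate point, as it is where the prescribed relation $\div\uu=-t^{-1}\tr\GG$ must be substituted back in to recover the full tensor identity from its deviatoric part.
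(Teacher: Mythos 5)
Your proposal is correct, and its existence/estimate part follows the paper's proof essentially verbatim: both feed the data $(\ff,\FF,f)=(\bg,\Dev\GG,-t^{-1}\tr\GG)$ into Theorem~\ref{thm:fo}, observe that $\Dev(\Dev\GG)=\Dev\GG$ and $\tr(\Dev\GG)=0$ collapse the bound to the claimed one, and then recover the tensor identity $\Dev\MM-t\Grad\uu=\GG$ by the same deviatoric/trace bookkeeping (the paper eliminates $p$ via the trace of the second equation, you apply the $\Dev$ projection directly -- an equivalent computation; likewise, the paper deduces $\tr\MM\in L_*^2(\Omega)$ from $\int_\Omega\tr\GG\,\di\xx=0$, while you use $\int_\Omega\div\uu\,\di\xx=0$ from the boundary condition, both fine). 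Where you genuinely diverge is uniqueness. The paper argues directly: for homogeneous data it expands $0=\norm{-t\Div\MM+\uu}{}^2+\norm{\Dev\MM-t\Grad\uu}{}^2$, and after integration by parts (using $\uu\in\HH_0^1(\Omega)$ and $\div\uu=0$) the cross terms vanish, leaving $t^2\norm{\Div\MM}{}^2+\norm{\Dev\MM}{}^2+t^2\norm{\Grad\uu}{}^2+\norm{\uu}{}^2=0$, from which $\uu=0$ and, via $\Dev\MM=0$, $\Div\MM=0$ and the zero-mean constraint, also $\MM=0$. You instead reconstruct the pressure $p:=-\tfrac{t}{d}\tr\MM\in L_*^2(\Omega)$, verify that $(\uu,\MM,p)\in Y_*$ solves the homogeneous three-field system~\eqref{eq:fo:general}, and invoke the uniqueness assertion of Theorem~\ref{thm:fo}. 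Your route is arguably tidier, since it makes the whole theorem a corollary of Theorem~\ref{thm:fo} and avoids a separate energy computation; the paper's direct argument, on the other hand, is self-contained and exhibits exactly the orthogonality identity (residual norms summing to the component norms) that foreshadows the coercivity mechanism behind the least-squares analysis in Section~\ref{sec:lsq}.
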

\begin{proof}
  We show existence of solutions of~\eqref{eq:fo:general:reduced}: 
  Let $(\bg,\GG)\in \LL^2(\Omega)\times \LLL^2_*(\Omega)$ be given. Consider the problem
  \begin{subequations}\label{eq:fo:general:reduced:proof}
  \begin{align}
    -t\Div\MM + \uu &= \bg, \label{eq:fo:general:reduced:proof:a} \\
    \MM - t\Grad\uu + \frac{p}t\II &= \Dev\GG, \label{eq:fo:general:reduced:proof:b} \\
    \div\uu &= -\frac1{t} \tr\GG. \label{eq:fo:general:reduced:proof:c}
  \end{align}
  \end{subequations}
  Using $(\ff,\FF,f) := (\bg,\Dev\GG,-t^{-1}\tr\GG)\in \LL^2(\Omega)\times \LLL^2(\Omega)\times L^2_*(\Omega)$ we immediately get from Theorem~\ref{thm:fo} that the latter system admits a unique solution satisfying the estimate
  \begin{align*}
    \enorm{(\uu,\MM)}_t \lesssim \norm{\bg}{} + \norm{\Dev\GG}{} + t^{-1}\norm{\tr\GG}{}.
  \end{align*}
  Furthermore, by taking the trace of~\eqref{eq:fo:general:reduced:proof:b} and replacing $\div\uu$ by the third equation we represent $p$ as
  \begin{align*}
    \frac{p}t = -\frac1d \tr\MM + \frac{t}d\div\uu = -\frac1d\tr\MM - \frac{1}d \tr\GG.
  \end{align*}
  Integrating over $\Omega$ and using that $\tr\GG\in L_*^2(\Omega)$ and $p\in L_*^2(\Omega)$ shows that $\tr\MM\in L_*^2(\Omega)$, i.e., $\MM\in \HDivsetStar\Omega$. 
  Using the latter identity another time
  and $\Dev\MM = \MM-d^{-1}\tr(\MM)\II$ in~\eqref{eq:fo:general:reduced:proof:b} we get that
  \begin{align*}
    \Dev\MM - t\nabla \uu -\frac{1}d(\tr\GG)\II = \Dev\GG,
  \end{align*}
  thus, 
  \begin{align*}
    \Dev\MM-t\nabla \uu = \GG.
  \end{align*}
  This, together with~\eqref{eq:fo:general:reduced:proof:b} shows that the pair $(\uu,\MM)\in \HH_0^1(\Omega)\times\HDivsetStar\Omega$ solves~\eqref{eq:fo:general:reduced}. 

  To see uniqueness of solutions suppose that $(\uu,\MM)\in \HH_0^1(\Omega)\times \HDivsetStar\Omega$ solves~\eqref{eq:fo:general:reduced} with $(\bg,\GG) = (0,0)$. It suffices to prove that this implies $(\uu,\MM)=(0,0)$.
  Taking the trace of~\eqref{eq:fo:general:reduced:b} we deduce that $\div\uu = 0$. Then, the norms of the residuals in~\eqref{eq:fo:general:reduced} and integration by parts imply that
  \begin{align*}
    0 &= \norm{-t\Div\MM + \uu}{}^2 + \norm{\Dev\MM-t\Grad\uu}{}^2 
    \\
    &= t^2 \norm{\Div\MM}{}^2 -2t\ip{\Div\MM}{\uu} + \norm{\uu}{}^2 
    \\&\qquad + \norm{\Dev\MM}{}^2 -2t\ip{\MM}{\Grad\uu} - 2td^{-1}\ip{\tr\MM}{\div\uu} + t^2\norm{\Grad\uu}{}^2
    \\
    &= t^2 \norm{\Div\MM}{}^2 + \norm{\Dev\MM}{}^2 + t^2\norm{\Grad\uu}{}^2 + \norm{\uu}{}^2.
  \end{align*}
  We conclude that $\uu = 0$, $\Dev\MM=0$ and $\Div\MM=0$. From $\Dev\MM=0$ it follows that $\MM=d^{-1}\tr(\MM)\II$. Then, $\Div\MM=0$ yields $\nabla\tr(\MM) = 0$.
  Since $\int_\Omega\tr\MM\,\di \xx = 0$ we have that $\tr\MM=0$, thus, $\MM=0$. 
  This finishes the proof.
\end{proof}

\section{Least-squares finite element method}\label{sec:lsq}
Based on the first-order reformulation~\eqref{eq:fo:reduced} and Theorem~\ref{thm:fo:reduced} we consider the functional
\begin{align*}
  J_*(\uu,\MM;\ff) := \norm{-t\Div\MM+\uu-\ff}{}^2 + \norm{\Dev\MM-t\Grad\uu}{}^2 + \norm{\div\uu}{}^2
\end{align*}
for all $(\uu,\MM)\in X_*$. Note that $\norm{\Dev\MM-t\Grad\uu}{}^2 = \norm{\Dev(\MM-t\Grad\uu)}{}^2 + d^{-1}t^2\norm{\div\uu}{}^2$.
The following theorem is one of our main results:
\begin{theorem}\label{thm:lsq}
  There exists a constant $C>0$ which depends on $\Omega$ but is independent of $t\in(0,1]$ such that
  \begin{align}
    C^{-1} \enorm{(\uu,\MM)}_t^2 \leq J_*(\uu,\MM;0) \leq
    C \enorm{(\uu,\MM)}_t^2
    \quad\text{for all } (\uu,\MM)\in X_*.
  \end{align}
\end{theorem}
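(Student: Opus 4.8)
The proof splits into the routine upper (continuity) bound and the substantive lower (coercivity) bound, and the plan is to obtain the latter as an immediate consequence of the a priori estimate in Theorem~\ref{thm:fo:reduced}. For the upper bound I would simply expand each of the three squared terms of $J_*(\uu,\MM;0)$ with the triangle and Young inequalities and compare with the definitions of $\norm{\uu}t$ and $\norm{\MM}t$: one has $\norm{-t\Div\MM+\uu}{}^2\le 2t^2\norm{\Div\MM}{}^2+2\norm{\uu}{}^2$, $\norm{\Dev\MM-t\Grad\uu}{}^2\le 2\norm{\Dev\MM}{}^2+2t^2\norm{\Grad\uu}{}^2$, and $\norm{\div\uu}{}^2\le\norm{\uu}t^2$. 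Summing these and using $t\le 1$ yields $J_*(\uu,\MM;0)\le C\enorm{(\uu,\MM)}_t^2$; this direction needs no structural input beyond the definitions of the parameter-dependent norms.

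For the lower bound, the plan is to read off $(\uu,\MM)$ as the solution of the reduced system~\eqref{eq:fo:general:reduced} with the \emph{residuals} as data. Given $(\uu,\MM)\in X_*$, I would set $\bg:=-t\Div\MM+\uu$ and $\GG:=\Dev\MM-t\Grad\uu$; then $(\uu,\MM)$ solves~\eqref{eq:fo:general:reduced} with right-hand side $(\bg,\GG)$ by construction, so Theorem~\ref{thm:fo:reduced} gives $\enorm{(\uu,\MM)}_t\le C(\norm{\bg}{}+\norm{\Dev\GG}{}+t^{-1}\norm{\tr\GG}{})$. It then remains to match each term on the right with a term of $J_*$. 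Since $\tr(\Dev\MM)=0$ and $\tr(\Grad\uu)=\div\uu$, one has $\tr\GG=-t\div\uu$, whence $\norm{\Dev\GG}{}\le\norm{\GG}{}=\norm{\Dev\MM-t\Grad\uu}{}$ and, crucially, $t^{-1}\norm{\tr\GG}{}=\norm{\div\uu}{}$. This reduces the bound to $\enorm{(\uu,\MM)}_t\le C(\norm{-t\Div\MM+\uu}{}+\norm{\Dev\MM-t\Grad\uu}{}+\norm{\div\uu}{})$, and squaring together with Cauchy--Schwarz on the three summands delivers $\enorm{(\uu,\MM)}_t^2\le 3C^2 J_*(\uu,\MM;0)$.

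Two points need care, and they are where the genuine content of this proof sits. First, Theorem~\ref{thm:fo:reduced} requires $\GG\in\LLL^2_*(\Omega)$; this is exactly the compatibility condition $\int_\Omega\tr\GG\,\di\xx=-t\int_\Omega\div\uu\,\di\xx=0$, which holds because $\uu\in\HH_0^1(\Omega)$ forces $\int_\Omega\div\uu\,\di\xx=\int_\Gamma\uu\cdot\normal\,\di s=0$. Second, the apparent factor $t^{-1}$ in the stability estimate is cancelled precisely by the factor $t$ in $\tr\GG=-t\div\uu$; this cancellation is what keeps the final bound uniform in $t\in(0,1]$ and would be lost in a careless estimate. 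Once these are in place, the main obstacle—the parameter-robust stability—has already been absorbed into Theorem~\ref{thm:fo:reduced} (and, through it, into the uniform $\inf$--$\sup$ condition of Proposition~\ref{prop:bb}), so no further singular-perturbation analysis is required here.
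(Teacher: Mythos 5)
Your proposal is correct and takes essentially the same route as the paper: the upper bound via the triangle inequality, and the lower bound by identifying $(\uu,\MM)$ as the (by uniqueness) solution of the reduced system~\eqref{eq:fo:general:reduced} with the residuals $(\bg,\GG)$ as data and invoking Theorem~\ref{thm:fo:reduced}, the key point being the exact cancellation $t^{-1}\norm{\tr\GG}{}=\norm{\div\uu}{}$. Your explicit verification that $\GG\in\LLL^2_*(\Omega)$ (via $\int_\Omega\div\uu\,\di\xx=0$ for $\uu\in\HH_0^1(\Omega)$) is a detail the paper asserts without proof, and your direct bound $\norm{\Dev\GG}{}\le\norm{\GG}{}$ is a harmless simplification of the paper's use of the identity $\norm{\Dev\MM-t\Grad\uu}{}^2=\norm{\Dev(\MM-t\Grad\uu)}{}^2+d^{-1}t^2\norm{\div\uu}{}^2$.
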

\begin{proof}
  The upper bound follows by the triangle inequality, i.e.,
  \begin{align*}
    J_*(\uu,\MM;0) &\lesssim t^2\norm{\Div\MM}{}^2 + \norm{\uu}{}^2 + 
    \norm{\Dev\MM}{}^2 + t^2\norm{\Grad \uu}{}^2 + \norm{\div\uu}{}^2
    \\ &\leq \enorm{(\uu,\MM)}_t^2.
  \end{align*}

  For the proof of the lower bound we apply Theorem~\ref{thm:fo:reduced}: Let $(\uu,\MM)\in X_*$ be given and define
  \begin{align*}
    \bg := -t\Div\MM +\uu, \quad \GG := \Dev\MM-t\Grad\uu,
  \end{align*}
  Note that $(\bg,\GG)\in \LL^2(\Omega)\times \LL_*^2(\Omega)$.
  By Theorem~\ref{thm:fo:reduced} the solution of the first-order system~\eqref{eq:fo:general:reduced} is unique and is therefore given
  by $(\uu,\MM)$. In particular, Theorem~\ref{thm:fo:reduced} shows that
  \begin{align*}
    \enorm{(\uu,\MM)}_t^2 &\lesssim \norm{\bg}{}^2 + \norm{\Dev\GG}{}^2 + t^{-2}\norm{\tr{\GG}}{}^2
    \\
    &= \norm{-t\Div\MM+\uu}{}^2 + \norm{\Dev(\MM-t\Grad\uu)}{}^2 + \norm{\div\uu}{}^2 
    \\
    &\leq \norm{-t\Div\MM+\uu}{}^2 + \norm{\Dev(\MM-t\Grad\uu)}{}^2 
    \\ &\qquad+ d^{-1}t^2\norm{\div\uu}{}^2 + \norm{\div\uu}{}^2 
    \\ 
    &= \norm{-t\Div\MM+\uu}{}^2 + \norm{\Dev\MM-t\Grad\uu}{}^2 + \norm{\div\uu}{}^2 
    \\ &= J_*(\uu,\MM;0).
  \end{align*}
  This concludes the proof.
\end{proof}

The following result is an immediate consequence of Theorem~\ref{thm:lsq} by using the well-established theory of least-squares principles, see, e.g.,~\cite[Section~2.2.1]{BochevGunzburgerLSQ} and in particular~\cite[Theorem~2.5 and Section~12.14]{BochevGunzburgerLSQ}.
\begin{corollary}\label{cor:lsq}
  Let $\ff\in\LL^2(\Omega)$ be given. The minimization problem
  \begin{align*}
    \min_{(\vv,\NN)\in X_*} J_*(\vv,\NN;\ff)
  \end{align*}
  has a unique minimizer $(\uu,\MM)\in X_*$ with $J_*(\uu,\MM;\ff)=0$ and the pair $(\uu,p):=(\uu,-d^{-1}t\tr\MM)$ solves problem~\eqref{eq:model}.

  Let $X_{*,h}\subset X_*$ denote a closed subspace. The minimization problem
  \begin{align*}
    \min_{(\vv_h,\NN_h)\in X_{*,h}} J_*(\vv_h,\NN_h;\ff)
  \end{align*}
  has a unique minimizer $(\uu_h,\MM_h)\in X_{*,h}$. Moreover, quasi-optimality
  \begin{align*}
    \enorm{(\uu-\uu_h,\MM-\MM_h)}_t \leq C \inf_{(\vv_h,\NN_h)\in X_{*,h}} 
    \enorm{(\uu-\vv_h,\MM-\NN_h)}_t
  \end{align*}
  holds, and the least-squares functional induces an efficient and reliable a posteriori error estimator, i.e., 
  \begin{align*}
    C^{-1} \enorm{(\uu-\vv_h,\MM-\NN_h)}_t^2 \leq J_*(\vv_h,\NN_h;\ff) 
    \leq C \enorm{(\uu-\vv_h,\MM-\NN_h)}_t^2 
  \end{align*}
  for any $(\vv_h,\NN_h)\in X_{*,h}$. 
  The constant $C>0$ depends on $\Omega$ but is independent of $t\in(0,1]$.
\end{corollary}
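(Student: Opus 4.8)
The plan is to recast the statement in the standard abstract least-squares framework and then read off each assertion from Theorem~\ref{thm:lsq} together with Theorem~\ref{thm:fo:reduced}. First I would introduce the product space $Z := \LL^2(\Omega)\times\LLL^2(\Omega)\times L^2(\Omega)$, equipped with the norm $\norm{(\bg,\GG,g)}{Z}^2 := \norm{\bg}{}^2+\norm{\GG}{}^2+\norm{g}{}^2$, and the bounded linear operator $\mathcal{B}\colon X_*\to Z$,
\begin{align*}
  \mathcal{B}(\vv,\NN) := (-t\Div\NN+\vv,\ \Dev\NN-t\Grad\vv,\ \div\vv),
\end{align*}
so that $J_*(\vv,\NN;\ff)=\norm{\mathcal{B}(\vv,\NN)-(\ff,0,0)}{Z}^2$. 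In this language Theorem~\ref{thm:lsq} states precisely that $\norm{\mathcal{B}(\vv,\NN)}{Z}^2\eqsim\enorm{(\vv,\NN)}_t^2$, i.e.\ that $\mathcal{B}$ is bounded and bounded below. Equivalently, the symmetric bilinear form $b\big((\uu,\MM),(\vv,\NN)\big):=\ip{\mathcal{B}(\uu,\MM)}{\mathcal{B}(\vv,\NN)}_Z$ is continuous and coercive on $(X_*,\enorm\cdot_t)$, which is the only ingredient the general theory needs.

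With boundedness and coercivity of $b$ in hand, the continuous minimization problem is equivalent to finding $(\uu,\MM)\in X_*$ with $b((\uu,\MM),(\vv,\NN))=\ip{(\ff,0,0)}{\mathcal{B}(\vv,\NN)}_Z$ for all $(\vv,\NN)\in X_*$, and the Lax--Milgram lemma delivers a unique minimizer. To identify it and show the functional vanishes, I would apply Theorem~\ref{thm:fo:reduced} with $(\bg,\GG)=(\ff,0)$: its unique solution $(\uu,\MM)\in X_*$ satisfies the first two equations of~\eqref{eq:fo:reduced} exactly, and taking the trace of $\Dev\MM-t\Grad\uu=0$ forces $t\div\uu=0$, hence $\div\uu=0$; thus all three residuals vanish and $J_*(\uu,\MM;\ff)=0$. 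Since $J_*\ge 0$, this is the global minimizer. For the pressure, set $p:=-d^{-1}t\tr\MM$; then $\int_\Omega p\,\di\xx=0$ because $\MM\in\HDivsetStar\Omega$, and writing $\MM=\Dev\MM+d^{-1}\tr(\MM)\II=t\Grad\uu-(p/t)\II$ and applying $-t\Div$ (using $\Div(\Grad\uu)=\boldsymbol\Delta\uu$ and $\Div(p\II)=\nabla p$) turns $-t\Div\MM+\uu=\ff$ into $-t^2\boldsymbol\Delta\uu+\uu+\nabla p=\ff$, so $(\uu,p)$ solves~\eqref{eq:model}.

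The discrete assertions are then routine. On the closed subspace $X_{*,h}$ the form $b$ remains continuous and coercive, so Lax--Milgram again gives a unique discrete minimizer $(\uu_h,\MM_h)$. Galerkin orthogonality $b((\uu-\uu_h,\MM-\MM_h),(\vv_h,\NN_h))=0$ for all $(\vv_h,\NN_h)\in X_{*,h}$, combined with the coercivity and continuity constants (C\'ea's lemma), yields the quasi-optimality estimate. Finally, since $\mathcal{B}(\uu,\MM)=(\ff,0,0)$, for any $(\vv_h,\NN_h)\in X_{*,h}$ we have $J_*(\vv_h,\NN_h;\ff)=\norm{\mathcal{B}(\vv_h-\uu,\NN_h-\MM)}{Z}^2$, and Theorem~\ref{thm:lsq} applied to $(\uu-\vv_h,\MM-\NN_h)\in X_*$ gives the two-sided bound $J_*(\vv_h,\NN_h;\ff)\eqsim\enorm{(\uu-\vv_h,\MM-\NN_h)}_t^2$, i.e.\ the efficient and reliable estimator. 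All constants inherit their $t$-independence directly from Theorem~\ref{thm:lsq}.

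Essentially nothing here is analytically hard once Theorem~\ref{thm:lsq} is available: the abstract least-squares machinery (coercivity $\Rightarrow$ Lax--Milgram $\Rightarrow$ C\'ea, plus the norm equivalence for the estimator) is entirely standard, as cited. The only step requiring genuine care is the pressure recovery and the verification that the abstract minimizer coincides with the solution of Theorem~\ref{thm:fo:reduced}: one must check that $\Dev\MM=t\Grad\uu$ really forces $\div\uu=0$ and that the substitution $p=-d^{-1}t\tr\MM$ reconstitutes the second-order equation through the row-wise divergence identities. This bookkeeping, rather than any estimate, is the main thing to get right.
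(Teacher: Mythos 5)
Your proposal is correct and follows essentially the same route as the paper, which gives no explicit proof but declares the corollary an immediate consequence of Theorem~\ref{thm:lsq} together with the standard abstract least-squares theory of~\cite{BochevGunzburgerLSQ}; your argument is precisely that standard machinery (coercivity/continuity of the residual form, Lax--Milgram, C\'ea, and the norm equivalence for the estimator), correctly instantiated, with the identification of the minimizer and the pressure recovery supplied by Theorem~\ref{thm:fo:reduced} just as the paper's Section~\ref{sec:fo} intends.
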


We point out that the condition $\int_\Omega \tr\MM\,\di\xx = 0$ is included in the space $X_*$. 
In practice, such a side constraint is often realized using Lagrangian multipliers which leads to indefinite algebraic
systems. 
In order to preserve the advantageous properties of LSFEMs we therefore consider an alternative least-squares functional whose discretization leads to a symmetric positive definite system.
For the new functional we simply add the zero average condition in a least-squares fashion, i.e.,
\begin{align*}
  J(\uu,\MM;\ff) := J_*(\uu,\MM;\ff) + t^2\norm{\Pi^0 \tr\MM}{}^2.
\end{align*}
Here, $\Pi^0q = |\Omega|^{-1}\int q\,d\xx$ is the $L^2(\Omega)$ projection on to constant functions.
\begin{theorem}\label{thm:lsq:J}
  There exists a constant $C>0$ which depends on $\Omega$ but is independent of $t\in(0,1]$ such that
  \begin{align}
    C^{-1} \enorm{(\uu,\MM)}_t^2 \leq J(\uu,\MM;0) \leq
    C \enorm{(\uu,\MM)}_t^2
    \quad\text{for all } (\uu,\MM)\in X.
  \end{align}
\end{theorem}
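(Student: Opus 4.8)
The key observation is that Theorem~\ref{thm:lsq:J} differs from the already-proved Theorem~\ref{thm:lsq} in two respects: the functional $J$ carries the extra penalty term $t^2\norm{\Pi^0\tr\MM}{}^2$, and the estimate is asserted over the full space $X = \HH_0^1(\Omega)\times\HDivset\Omega$ rather than the restricted space $X_*$ in which $\int_\Omega\tr\MM\,\di\xx=0$ is enforced. The plan is therefore to reduce the new statement to the old one by splitting an arbitrary $\MM\in\HDivset\Omega$ into its mean-free part and its (constant-trace) complement, applying Theorem~\ref{thm:lsq} to the mean-free part, and checking that the penalty term exactly controls the complement.

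\textbf{Upper bound.}
The upper bound is the easy direction and needs no splitting. Since $J(\uu,\MM;0) = J_*(\uu,\MM;0) + t^2\norm{\Pi^0\tr\MM}{}^2$, I would first bound $J_*(\uu,\MM;0)\lesssim\enorm{(\uu,\MM)}_t^2$ exactly as in the proof of Theorem~\ref{thm:lsq} (this computation does not use the zero-average constraint, so it is valid on all of $X$). For the new term, observe that $\Pi^0$ is an $L^2$-orthogonal projection, so $\norm{\Pi^0\tr\MM}{}\leq\norm{\tr\MM}{}$, whence $t^2\norm{\Pi^0\tr\MM}{}^2\leq t^2\norm{\tr\MM}{}^2\leq\enorm{(\uu,\MM)}_t^2$. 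Adding the two contributions gives $J(\uu,\MM;0)\lesssim\enorm{(\uu,\MM)}_t^2$.

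\textbf{Lower bound.}
This is the substantive direction and where the penalty term earns its keep. Given $(\uu,\MM)\in X$, write $c:=\Pi^0\tr\MM$ (a constant) and set $\MM_0 := \MM - d^{-1}c\,\II$, so that $\tr\MM_0 = \tr\MM - c$ has zero average and hence $\MM_0\in\HDivsetStar\Omega$, i.e. $(\uu,\MM_0)\in X_*$. The point is that subtracting a constant multiple of $\II$ does not change $\Div$ or $\Dev$: one has $\Div\MM_0=\Div\MM$, $\Dev\MM_0=\Dev\MM$, and $\Grad\uu$ is untouched, so $J_*(\uu,\MM_0;0)=J_*(\uu,\MM;0)$. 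Applying Theorem~\ref{thm:lsq} to $(\uu,\MM_0)\in X_*$ yields $\enorm{(\uu,\MM_0)}_t^2\lesssim J_*(\uu,\MM_0;0)=J_*(\uu,\MM;0)\leq J(\uu,\MM;0)$. It then remains to recover $\enorm{(\uu,\MM)}_t$ from $\enorm{(\uu,\MM_0)}_t$ together with the penalty term. Since $\MM=\MM_0+d^{-1}c\,\II$, the only component of $\enorm{(\uu,\MM)}_t^2$ affected is $t^2\norm{\tr\MM}{}^2$ (the $\Dev$ and $\Div$ parts agree for $\MM$ and $\MM_0$), and by orthogonality of the decomposition $\tr\MM = (\tr\MM - c) + c$ one has $t^2\norm{\tr\MM}{}^2 = t^2\norm{\tr\MM_0}{}^2 + t^2|\Omega|\,c^2 \leq t^2\norm{\tr\MM_0}{}^2 + C\,t^2\norm{\Pi^0\tr\MM}{}^2$. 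The first term is absorbed into $\enorm{(\uu,\MM_0)}_t^2$ and the second is precisely (a multiple of) the added penalty term in $J$. Combining the two bounds gives $\enorm{(\uu,\MM)}_t^2\lesssim J(\uu,\MM;0)$.

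\textbf{Main obstacle.}
There is no deep difficulty here; the theorem is essentially a bookkeeping extension of Theorem~\ref{thm:lsq}. The one point requiring care is verifying that the constant shift $\MM\mapsto\MM_0$ leaves every term of $J_*$ invariant and only perturbs the single trace term of the energy norm, so that the penalty $t^2\norm{\Pi^0\tr\MM}{}^2$ provides exactly the missing control on the mean of $\tr\MM$. Keeping track of the factors of $t$ is the only place an error could creep in, since the mean contributes with weight $t^2$ on both sides and the uniformity in $t\in(0,1]$ must be preserved.
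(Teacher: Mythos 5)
Your proposal is correct and follows essentially the same route as the paper: the paper also splits off the constant-trace part via $\widetilde\MM := \MM - d^{-1}\Pi^0\tr(\MM)\,\II$ (your $\MM_0$), notes that $\Div$ and $\Dev$ are unchanged so $J_*$ is invariant, applies Theorem~\ref{thm:lsq} to $(\uu,\widetilde\MM)\in X_*$, and controls the removed mean by the penalty term $t^2\norm{\Pi^0\tr\MM}{}^2$. The only difference is cosmetic bookkeeping (the paper applies the triangle inequality before invoking Theorem~\ref{thm:lsq}, you reassemble the norm afterwards by $L^2$-orthogonality), so nothing further is needed.
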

\begin{proof}
  The upper bound follows as in the proof of Theorem~\ref{thm:lsq} and the fact that $t^2\norm{\Pi^0 \tr\MM}{}\leq t^2\norm{\tr\MM}{}$.

  The lower bound can be seen by using $(1-\Pi^0)\tr\MM \in L_*^2(\Omega)$ so that $\widetilde\MM := \MM-d^{-1}\Pi^0\tr(\MM)\II$ satisfies $\tr\widetilde\MM = (1-\Pi^0)\tr\MM\in L_*^2(\Omega)$.
  The triangle inequality, and Theorem~\ref{thm:lsq} further prove that
  \begin{align*}
    \enorm{(\uu,\MM)}_t^2 &\lesssim \enorm{(\uu,\widetilde\MM)}_t^2 + t^2\norm{\Pi^0 \tr\MM}{}^2
    \lesssim J_*(\uu,\widetilde\MM;0) + t^2\norm{\Pi^0\tr\MM}{}^2
    \\
    &= \norm{-t\Div\widetilde\MM}{}^2 + \norm{\Dev\widetilde\MM -t\Grad\uu}{}^2 + \norm{\div\uu}{}^2 + t^2\norm{\Pi^0\tr\MM}{}^2
    \\
    &= \norm{-t\Div\MM}{}^2 + \norm{\Dev\MM -t\Grad\uu}{}^2 + \norm{\div\uu}{}^2 + t^2\norm{\Pi^0\tr\MM}{}^2 
    \\
    &= J(\uu,\MM;0).
  \end{align*}
  This finishes the proof.
\end{proof}

The following result is an immediate consequence of Theorem~\ref{thm:lsq:J} and the theory on least-squares finite element methods.
\begin{corollary}\label{cor:lsq:J}
  Let $\ff\in\LL^2(\Omega)$ be given. The minimization problem
  \begin{align*}
    \min_{(\vv,\NN)\in X} J(\vv,\NN;\ff)
  \end{align*}
  has a unique minimizer $(\uu,\MM)\in X$ with $J(\uu,\MM;\ff)=0$ and the pair $(\uu,p):=(\uu,-d^{-1}t\tr\MM)$ solves problem~\eqref{eq:model}.

  Let $X_{h}\subset X$ denote a closed subspace. The minimization problem
  \begin{align*}
    \min_{(\vv_h,\NN_h)\in X_{h}} J(\vv_h,\NN_h;\ff)
  \end{align*}
  has a unique minimizer $(\uu_h,\MM_h)\in X_{h}$. Moreover, quasi-optimality
  \begin{align*}
    \enorm{(\uu-\uu_h,\MM-\MM_h)}_t \leq C \inf_{(\vv_h,\NN_h)\in X_{h}} 
    \enorm{(\uu-\vv_h,\MM-\NN_h)}_t
  \end{align*}
  holds, and the least-squares functional induces an efficient and reliable a posteriori error estimator, i.e., 
  \begin{align*}
    C^{-1} \enorm{(\uu-\vv_h,\MM-\NN_h)}_t^2 \leq J(\vv_h,\NN_h;\ff) 
    \leq C \enorm{(\uu-\vv_h,\MM-\NN_h)}_t^2 
  \end{align*}
  for any $(\vv_h,\NN_h)\in X_h$. 
  The constant $C>0$ depends on $\Omega$ but is independent of $t\in(0,1]$.
\end{corollary}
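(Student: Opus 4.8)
The plan is to recognize that $J(\cdot,\cdot;\ff)$ is the squared residual of an affine map and then feed the norm equivalence of Theorem~\ref{thm:lsq:J} into the standard least-squares machinery, exactly as Corollary~\ref{cor:lsq} was deduced from Theorem~\ref{thm:lsq}. Concretely, I would write $J(\vv,\NN;\ff)=\norm{L(\vv,\NN)-F}{}^2$, where $L\colon X\to\LL^2(\Omega)\times\LLL^2(\Omega)\times L^2(\Omega)\times\R$ collects the four residual components $(-t\Div\NN+\vv,\ \Dev\NN-t\Grad\vv,\ \div\vv,\ t\Pi^0\tr\NN)$ and $F=(\ff,0,0,0)$. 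Then $J(\vv,\NN;0)=\norm{L(\vv,\NN)}{}^2$, so Theorem~\ref{thm:lsq:J} says precisely that $\norm{L(\vv,\NN)}{}\eqsim\enorm{(\vv,\NN)}_t$; in particular $L$ is bounded and bounded below on the Hilbert space $(X,\enorm\cdot_t)$, hence injective.

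For the continuous problem I would obtain existence and uniqueness of the minimizer from coercivity and strict convexity: the lower bound makes $(\vv,\NN)\mapsto\norm{L(\vv,\NN)}{}^2$ a coercive, strictly convex quadratic on $X$, so it has a unique minimizer. To identify the minimal value I would exhibit a zero of $J$: solving the reduced system~\eqref{eq:fo:reduced} with datum $\ff$ via Theorem~\ref{thm:fo:reduced} (take $\bg=\ff$, $\GG=0$) yields $(\uu,\MM)\in X_*\subset X$ with vanishing first three residuals, and $\MM\in\HDivsetStar\Omega$ forces $\Pi^0\tr\MM=0$, so $J(\uu,\MM;\ff)=0$. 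Since $J\ge 0$, this point is the unique minimizer. The pressure recovery is then a short computation: with $p:=-d^{-1}t\tr\MM$ and using $\div\uu=0$ one rebuilds $\MM=t\Grad\uu-(p/t)\II$, whence $-t\Div\MM+\uu=-t^2\boldsymbol\Delta\uu+\nabla p+\uu=\ff$, while $\div\uu=0$, $\int_\Omega p\,d\xx=0$ (from $\MM\in\HDivsetStar\Omega$) and $\uu|_\Gamma=0$ complete~\eqref{eq:model}.

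For the discrete statements the workhorse is the single algebraic identity $J(\vv_h,\NN_h;\ff)=J(\vv_h-\uu,\NN_h-\MM;0)$, valid because $L(\uu,\MM)=F$. Coercivity and boundedness of $J(\cdot,\cdot;0)$ on the closed subspace $X_h$ give a unique discrete minimizer $(\uu_h,\MM_h)$ by the Lax--Milgram/Riesz theorem. Quasi-optimality then follows by chaining the lower bound of Theorem~\ref{thm:lsq:J}, minimality, the identity above, and the upper bound:
\begin{align*}
  \enorm{(\uu-\uu_h,\MM-\MM_h)}_t^2 &\lesssim J(\uu_h,\MM_h;\ff) \le J(\vv_h,\NN_h;\ff)\\
  &= J(\vv_h-\uu,\NN_h-\MM;0)\lesssim \enorm{(\uu-\vv_h,\MM-\NN_h)}_t^2,
\end{align*}
followed by taking the infimum over $X_h$. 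Applying the full equivalence of Theorem~\ref{thm:lsq:J} directly to $J(\vv_h,\NN_h;\ff)=J(\vv_h-\uu,\NN_h-\MM;0)$ immediately yields the two-sided a posteriori bound.

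The only step beyond bookkeeping is the exactness/solvability claim for the continuous problem, and its subtlety is that $J$ lives on the unconstrained space $X$ whereas the solvability result Theorem~\ref{thm:fo:reduced} lives on the mean-constrained space $X_*$. The point to get right is that the penalty term $t^2\norm{\Pi^0\tr\MM}{}^2$ vanishes exactly when $\int_\Omega\tr\MM\,d\xx=0$, i.e.\ when $\MM\in\HDivsetStar\Omega$; this is what pins the minimizer inside $X_*$ and legitimizes the appeal to Theorem~\ref{thm:fo:reduced}. Everything else is the routine least-squares template already used for Corollary~\ref{cor:lsq}.
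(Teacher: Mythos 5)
Your proposal is correct and takes essentially the same route as the paper, which deduces the corollary directly from Theorem~\ref{thm:lsq:J} via the standard least-squares theory (the same template cited for Corollary~\ref{cor:lsq}). You simply fill in the textbook details the paper leaves implicit, including the one genuinely delicate point: the exact solution produced by Theorem~\ref{thm:fo:reduced} lies in $X_*$, so the penalty term $t^2\norm{\Pi^0\tr\MM}{}^2$ vanishes and $J(\uu,\MM;\ff)=0$ on the unconstrained space $X$.
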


Under some weak assumptions on the spaces which are satisfied for $X$ and standard discretization spaces we show that
the functionals $J_*$ and $J$ have the same minimizer on the discrete level.
\begin{theorem}\label{thm:equivalence}
  Let $\ff\in\LL^2(\Omega)$. Let $X_h\subseteq X$ denote a closed subspace 
  such that $X_{*,h} := X_h\cap X_*$ is non-empty. If 
  \begin{align}
    (0,\II)\in X_h,
  \end{align}
  then the two minimization problems
  \begin{align}
    &\min_{(\vv_h,\NN_h)\in X_{*,h}} J_*(\vv_h,\NN_h;\ff), \label{eq:minProb:a} \\
    &\min_{(\vv_h,\NN_h)\in X_{h}} J(\vv_h,\NN_h;\ff), \label{eq:minProb:b}
  \end{align}
  have the same (unique) minimizer $(\uu_h,\MM_h)\in X_{*,h}$.
\end{theorem}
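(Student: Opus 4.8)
The plan is to isolate the only feature distinguishing $J$ from $J_*$, namely the penalty $t^2\norm{\Pi^0\tr\MM}{}^2$, and to exploit the hypothesis $(0,\II)\in X_h$. First I would note that this penalty vanishes whenever $\MM\in\LLL_*^2(\Omega)$, so that $J$ and $J_*$ coincide on all of $X_*$, hence on $X_{*,h}$. Existence and uniqueness of a minimizer of~\eqref{eq:minProb:a} then follow from the coercivity in Theorem~\ref{thm:lsq} (restricted to the closed subspace $X_{*,h}$), and of a minimizer of~\eqref{eq:minProb:b} from Theorem~\ref{thm:lsq:J} on $X_h$; both are standard quadratic minimizations over closed subspaces. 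Writing $(\uu_h,\MM_h)\in X_{*,h}$ for the minimizer of~\eqref{eq:minProb:a}, it then suffices to show that the minimizer of~\eqref{eq:minProb:b} also lies in $X_{*,h}$, since there the two functionals agree.

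The key computation tracks how $J$ changes under the shift $\MM\mapsto\MM+c\II$, $c\in\R$. Because $c\II$ has constant entries we have $\Div(c\II)=0$, and because $\Dev(c\II)=c\II-\tfrac1d\tr(c\II)\II=0$, every term of $J_*$ is unaffected, so $J_*(\vv_h,\NN_h+c\II;\ff)=J_*(\vv_h,\NN_h;\ff)$. Only the penalty reacts, through $\Pi^0\tr(\NN_h+c\II)=\Pi^0\tr\NN_h+cd$. Since $(0,\II)\in X_h$ and $X_h$ is a subspace, this yields the direct-sum decomposition $X_h=X_{*,h}\oplus\linhull\{(0,\II)\}$: setting $m:=\Pi^0\tr\NN_h$ and $\widetilde\NN_h:=\NN_h-\tfrac{m}{d}\II$, one has $(\vv_h,\widetilde\NN_h)\in X_{*,h}$ and $(\vv_h,\NN_h)=(\vv_h,\widetilde\NN_h)+\tfrac{m}{d}(0,\II)$.

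Combining the two observations, for every $(\vv_h,\NN_h)\in X_h$ I would obtain
\begin{align*}
  J(\vv_h,\NN_h;\ff)=J_*(\vv_h,\widetilde\NN_h;\ff)+t^2\norm{\Pi^0\tr\NN_h}{}^2\geq J_*(\vv_h,\widetilde\NN_h;\ff)=J(\vv_h,\widetilde\NN_h;\ff),
\end{align*}
with equality exactly when $\Pi^0\tr\NN_h=0$, that is, when $(\vv_h,\NN_h)\in X_{*,h}$. As $(\vv_h,\widetilde\NN_h)$ ranges over all of $X_{*,h}$ while $(\vv_h,\NN_h)$ ranges over $X_h$, this gives $\inf_{X_h}J(\cdot;\ff)=\inf_{X_{*,h}}J_*(\cdot;\ff)$ and shows the infimum over $X_h$ is attained only on $X_{*,h}$ (strictness coming from $t>0$). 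Hence the unique minimizer of~\eqref{eq:minProb:b} lies in $X_{*,h}$, where $J=J_*$, and therefore coincides with $(\uu_h,\MM_h)$.

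The argument is short and the only genuine content is the invariance $\Div(c\II)=0=\Dev(c\II)$, which decouples the added penalty from the least-squares residuals and turns the trace-average of $\MM$ into an independent scalar degree of freedom. I expect the part needing care to be the bookkeeping of the decomposition — in particular verifying that $\widetilde\NN_h$ remains in $X_h$, which is precisely where $(0,\II)\in X_h$ enters, and that the reduction is onto $X_{*,h}$. An equivalent and perhaps more economical route avoids the explicit decomposition: if $(\uu_h',\MM_h')$ denotes the minimizer of~\eqref{eq:minProb:b}, optimality forces $c=0$ to minimize the parabola $c\mapsto J(\uu_h',\MM_h'+c\II;\ff)$, whose vertex lies at $c=-m/d$, and this first-order condition gives $m=0$ at once.
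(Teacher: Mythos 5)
Your proof is correct, but it takes a genuinely different route from the paper's. The paper characterizes both discrete minimizers through their Euler--Lagrange (Galerkin) equations, introducing the bilinear forms $b_*$ and $b$ with $b(\uu,\MM;\vv,\NN) = b_*(\uu,\MM;\vv,\NN) + t^2\ip{\Pi^0\tr\MM}{\Pi^0\tr\NN}$, and then shows that the solution of each linear system satisfies the other: in one direction by verifying the single extra equation coming from the test function $(0,\II)$, in the other by testing with $(0,\II)$ to deduce $\Pi^0\tr\MM_h = 0$. You instead argue directly at the level of the functionals: the invariance $J_*(\vv_h,\NN_h + c\II;\ff) = J_*(\vv_h,\NN_h;\ff)$ (from $\Div\II = 0 = \Dev\II$), combined with the decomposition $X_h = X_{*,h} \oplus \linhull\{(0,\II)\}$ via $\widetilde\NN_h := \NN_h - d^{-1}(\Pi^0\tr\NN_h)\II$, gives the exact identity $J(\vv_h,\NN_h;\ff) = J_*(\vv_h,\widetilde\NN_h;\ff) + t^2\norm{\Pi^0\tr\NN_h}{}^2$, from which the two infima coincide and any minimizer of $J$ over $X_h$ is forced into $X_{*,h}$, where $J=J_*$. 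Both arguments rest on precisely the same algebraic facts ($\Div\II = \Dev\II = 0$, $\tr\II = d$, and $(0,\II)$ spanning the complement of $X_{*,h}$ in $X_h$); indeed your closing remark about minimizing the parabola $c\mapsto J(\uu_h',\MM_h'+c\II;\ff)$ is exactly the variational counterpart of the paper's test with $(0,\II)$. What your route buys is a more elementary, normal-equations-free argument that additionally exhibits $\inf_{X_h} J = \inf_{X_{*,h}} J_*$ with strict inequality off $X_{*,h}$; what the paper's route buys is that it operates directly on the discrete linear systems one actually assembles and solves, so the equivalence is immediately visible at the level of implementation.
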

\begin{proof}
  In the proof we utilize the characterization of the minimizers by the Euler--Lagrange equations.
  For both cases we have the same right-hand side
  \begin{align*}
    L(\vv,\NN) := \ip{\ff}{-t\Div\NN+\vv} \quad\forall (\vv,\NN)\in X.
  \end{align*}

  We define the bilinear forms
  \begin{align*}
    b_*(\uu,\MM;\vv,\NN) &:= 
    \ip{-t\Div\MM+\uu}{-t\Div\NN+\vv} \\ &\qquad + \ip{\Dev\MM-t\Grad\uu}{\Dev\NN-t\Grad\vv}
    + \ip{\div\uu}{\div\vv} \\
    b(\uu,\MM;\vv,\NN) &:= b_*(\uu,\MM;\vv,\NN) + t^2\ip{\Pi^0\tr\MM}{\Pi^0\tr\NN}
  \end{align*}
  for all $(\uu,\MM),(\vv,\NN)\in X$.
  \newline
  The Euler--Lagrange equations for~\eqref{eq:minProb:a} read: Find $(\uu_h,\MM_h)\in X_{*,h}$:
  \begin{align}\label{eq:eulerlagrange:a}
    b_*(\uu_h,\MM_h;\vv,\NN) = L(\vv,\NN) \quad\text{for all } (\vv,\NN) \in X_{*,h}.
  \end{align}
  The Euler--Lagrange equations for~\eqref{eq:minProb:b} read: Find $(\uu_h,\MM_h)\in X_{h}$:
  \begin{align}\label{eq:eulerlagrange:b}
    b(\uu_h,\MM_h;\vv,\NN) = L(\vv,\NN) \quad\text{for all } (\vv,\NN) \in X_{h}.
  \end{align}

  \noindent
  \textbf{The solution of~\eqref{eq:eulerlagrange:a} solves~\eqref{eq:eulerlagrange:b}.}
  Let $(\uu_h,\MM_h)\in X_{*,h}$ denote the solution of~\eqref{eq:eulerlagrange:a}. 
  Since $b_*(\uu_h,\MM_h;\vv,\NN) = b(\uu_h,\MM_h;\vv,\NN)$ for all $(\vv,\NN)\in X_{*,h}$ it suffices to verify the identity
  $b(\uu_h,\MM_h;0,\II) = L(0,\II) = 0$.
  Using $\Div\II = 0$, $\Dev\II = 0$, and $\tr\MM_h\in L_*^2(\Omega)$, we see that $b_*(\uu,\MM_h;0,\II) = 0$, and, thus,
  \begin{align*}
    b(\uu_h,\MM_h;0,\II) = b_*(\uu_h,\MM_h;0,\II) + t^2\ip{\tr\MM_h}{\tr\II} = t^2\ip{\tr\MM_h}d = 0.
  \end{align*}

  \noindent
  \textbf{The solution of~\eqref{eq:eulerlagrange:b} solves~\eqref{eq:eulerlagrange:a}.}
  Let $(\uu_h,\MM_h)\in X_h$ denote the solution of~\eqref{eq:eulerlagrange:b}. It is sufficient to prove that $\Pi^0\tr\MM_h=0$. 
  This follows from testing with $(0,\II)\in X_h$ which yields with the similar argumentation as in the previous step that
  \begin{align*}
    t^2\ip{\tr\MM_h}{\tr\II} = 0
  \end{align*}
  or equivalently $\Pi^0\tr\MM_h=0$. 
\end{proof}

Some remarks are in order.
\begin{remark}
  We note that the additional term $t^2\norm{\Pi^0\tr\MM}{}^2$ in the functional $J$ is a non-local term. 
  However, using an iterative solver this term can efficiently be implemented since it is a rank-$1$ term.

  The condition $(0,\II)\in X_h$ is satisfied for generic approximation spaces, see, e.g., Section~\ref{sec:discretization} below.
\end{remark}
\begin{remark}
  Approximations of the pressure can be recovered from the pseudostress variable: Let $(u_h,\MM_h)\in X_h$ denote the solution of~\eqref{eq:eulerlagrange:b}. Recall that $p = -d^{-1}t\tr\MM$ and set $p_h = -d^{-1}t\tr\MM_h$. The quasi-optimality of the least-squares methods proves
  \begin{align*}
    \norm{p-p_h}{} \leq d^{-1} t\norm{\tr(\MM-\MM_h)}{} &\lesssim \enorm{(\uu-\uu_h,\MM-\MM_h)}_t 
    \\
    &\lesssim \min_{(\vv_h,\NN_h)\in X_h} \enorm{(\uu-\vv_h,\MM-\NN_h)}_t.
  \end{align*}
\end{remark}

\section{Discretization}\label{sec:discretization}
We stress that least-squares methods are well-defined for any choice of conforming discrete spaces.
However, for simplicity we restrict our presentation to simplicial meshes and standard approximation spaces found in the
literature.

\subsection{Meshes and approximation spaces}
Let $\TT$ denote a regular mesh of $\Omega$ into (relatively open) simplices $T$, i.e.,
\begin{align*}
  \overline\Omega = \bigcup_{T\in\TT} \overline T.
\end{align*}
We assume that $\TT$ is $\kappa$-shape regular, i.e.,
\begin{align*}
  \max_{T\in\TT} \frac{\diam(T)^d}{|T|} \leq \kappa <\infty.
\end{align*}

The mesh-width function $h_\TT\in L^\infty(\Omega)$ is defined by
\begin{align*}
  h_\TT|_T := h_T := \diam(T) \quad\text{for all }T\in\TT.
\end{align*}
For a fixed mesh we set $h:=\max_{T\in\TT} h_T$.

Let $\PP^k(T)$ denote the set of polynomials on an element $T$ of degree less or equal to $k\in\N_0$. We define
\begin{align*}
  \PP^k(\TT) := \set{v\in L^2(\Omega)}{v|_T \in \PP^k(T) \text{ for all }T\in\TT}.
\end{align*}
Furthermore, we need the spaces
\begin{align*}
  \cS^{k+1}(\TT) &:= \PP^{k+1}(\TT)\cap H^1(\Omega), \quad
  \cS_0^{k+1}(\TT) := \cS^{k+1}(\TT)\cap H_0^1(\Omega), \\
  \cSS_0^{k+1}(\TT) &:= \cS_0^{k+1}(\TT)^d.
\end{align*}

The local Raviart--Thomas $\RT^k(T)$ space is given by
\begin{align*}
  \RT^k(T) := \PP^k(T)^d + \xx \PP^k(T)
\end{align*}
and the global one by
\begin{align*}
  \RT^k(\Omega) := \set{\vv\in \LL^2(\Omega)}{\div\vv\in L^2(\Omega) \text{ and }
    \vv|_T\in \RT^k(T) \text{ for all }T\in\TT}.
  \end{align*}
We will use the space
\begin{align*}
  \boldsymbol\RT^k(\TT) := \set{\MM\in \HDivset\Omega}{\MM_{j,\cdot} \in \RT^k(\TT) \text{ for }
  j=1,\dots d}.
\end{align*}
Here, $\MM_{j,\cdot}$ denotes the $j$-th row of $\MM$.

Moreover, in the analysis below we use the following interpolation operators:
\begin{itemize}
  \item $\Pi_{L^2}^k \colon L^2(\Omega)\to \PP^k(\TT)$ the $L^2(\Omega)$ projection with
    \begin{align*}
      \norm{q-\Pi_{L^2}^kq}{} \lesssim h^{k+1}\norm{q}{H^{k+1}(\Omega)}, 
    \end{align*}
  \item $\Pi_{\RT}^k\colon \boldsymbol\HH^1(\Omega)\to \boldsymbol\RT^k(\TT)$ the Raviart--Thomas projector (applied to
    each row of the matrix) with
    \begin{align*}
      \norm{\NN-\Pi_{\RT}^k\NN}{} &\lesssim h^{k+1}\norm{\NN}{H^{k+1}(\Omega)}
      \\
      \Div(\NN-\Pi_{\RT}^k\NN) &= (1-\Pi_{L^2}^k)\NN,
    \end{align*}
\end{itemize}
The constants involved in the estimates depend only on the shape-regularity of $\TT$ and $k\in\N_0$.

\subsection{Locking effect for a standard discretization}
We investigate the discrete space
\begin{align*}
  X_h := \cSS_0^{k+1}(\TT) \times \boldsymbol\RT^k(\TT).
\end{align*}
It is easy to check that $(0,\II)\in X_h$. Thus, the assumptions of Theorem~\ref{thm:equivalence}
are satisfied. In particular, this means we can restrict our investigations to the functional $J$ (Theorem~\ref{thm:equivalence}).

Let $\ff\in \LL^2(\Omega)$ be given and let $(\uu,\MM)\in X$, $(\uu_h,\MM_h)\in X_h$ denote the solution of the
minimization problems
\begin{align*}
  \min_{(\vv,\MM)\in X} J(\vv,\NN;\ff) \quad\text{and}\quad
  \min_{(\vv_h,\MM_h)\in X_h} J(\vv_h,\NN_h;\ff)
\end{align*}
which are unique due to Theorem~\ref{thm:lsq:J}.
Recall that
\begin{align*}
  \enorm{(\uu-\uu_h,\MM-\MM_h)}_t \leq C \inf_{(\vv_h,\NN_h)\in X_{h}} 
  \enorm{(\uu-\vv_h,\MM-\NN_h)}_t.
\end{align*}
It is clear from the approximation properties of the involved spaces and the definition of the norms that the
best-approximation error, i.e., the right-hand side of the latter estimate, is $\OO(h^{k+1})$ provided that the
solution $(\uu,\MM)$ is sufficiently smooth.
However, such estimates depend on higher order Sobolev norms of the solution $(\uu,\MM)$ which in general are not independent of the singular perturbation parameter $t$. 
As we show below in Example~\ref{ex:convrates}, the space $X_h$ may lead to a locking effect that is also seen in numerical experiments, see Section~\ref{sec:num:comp}.
One reason is that a projection of $\MM$ with $\Dev\MM=0$ onto the Raviart--Thomas space in general leads to an element
$\MM_h$ with $\Dev\MM_h\neq 0$.
The following example with a simple manufactured solution makes this statement more precise.
\begin{example}\label{ex:convrates}
  Let $\Omega = (0,1)^2$ and set $\uu = 0$, $p(x,y) = x^2-\tfrac13$, $\MM = -t^{-1}p\II$. 
  Note that this choice is the unique solution of~\eqref{eq:model} with $\ff = \nabla p$.
  To estimate the best-approximation error with respect to $\enorm\cdot_t$ we set $\vv_h=0$ and $\NN_h = \Pi_{\RT}^k \MM$. Then,
  \begin{align*}
    \enorm{(\uu-\vv_h,\MM-\NN_h)}_t^2 &= t^2\norm{\Div(\MM-\NN_h)}{}^2 
    + \norm{\Dev(\MM-\NN_h)}{}^2 \\
    &\qquad+ t^2\norm{\tr(\MM-\NN_h)}{}^2 .
  \end{align*}
  The commutativity property of the Raviart--Thomas projection proves that the first term satisfies the estimate
  \begin{align*}
    t^2\norm{\Div(\MM-\NN_h)}{}^2 \lesssim h^{2(k+1)}\norm{p}{H^{k+2}(\Omega)}^2.
  \end{align*}
  The last term is estimated by
  \begin{align*}
    t^2\norm{\tr(\MM-\NN_h)}{}^2 = \norm{\tr((1-\Pi_{\RT}^k)p\II)}{}^2 \lesssim h^{2(k+1)}\norm{p}{H^{k+1}(\Omega)}^2.
  \end{align*}
  Note that the last estimate is independent of $t\in(0,1]$.
  It remains to analyze the deviatoric part. We have that $\NN_h = \Pi_{\RT}^k(-t^{-1}p\II)$.
  It can easily be verified that $\Dev\NN_h\neq 0$. Therefore,
  \begin{align*}
    \norm{\Dev(\MM-\NN_h)}{}^2 = t^{-2} \norm{\Dev(1-\Pi_{\RT}^k)p\II}{}^2.
  \end{align*}
  Since $p$ is completely independent of $t$ we can not get rid of $t^{-2}$.
  This can lead to locking effects which are seen in numerical experiments, cf. Section~\ref{sec:num:comp}.
\end{example}

\subsection{Augmented discretization space}\label{sec:regularities}
In order to avoid such effects described in the last section, we augment the discrete space with deviatoric free elements.
It is straightforward to verify that if we want elementwise polynomial spaces which are deviatoric free and subspaces of
$\HDivset\Omega$ such elements take the form
\begin{align*}
  \eta\II \quad\text{with } \eta\in \cS^{k+1}(\TT).
\end{align*}
Since $\II\in\boldsymbol\RT^k(\TT)$ it suffices to add zero average functions.
With $\cS_*^{k+1}(\TT)\II := \set{\eta\II}{\eta\in \cS^{k+1}(\TT)\cap L_*^2(\Omega)}$ we define the space
\begin{align*}
  X_h^+ := \cSS_0^{k+1}(\TT) \times \left(\boldsymbol\RT^k(\TT)+\cS_*^{k+1}(\TT)\II\right).
\end{align*}
We stress that $\boldsymbol\RT^k(\TT)\cap\big(\cS_*^{k+1}(\TT)\setminus\cS_*^{k}(\TT)\big)\II = \{0\}$.

Concerning Example~\ref{ex:convrates} the space $X_h^+$ leads to locking free approximations:
\begin{example}\label{ex:convrates2}
  Consider the setup of Example~\ref{ex:convrates}. We estimate the best-approximation error with respect to the norm $\enorm\cdot_t$ and the space $X_h^+$. 
  To that end we choose $\vv_h = 0$ and $\NN_h = \Pi_{\RT}^k\Dev\MM + \Pi_{\cS}^{k+1}\tfrac{p}t\II$. Since $\Dev\MM = t\Grad\uu =0$ we have that $\NN_h = \Pi_{\cS}^{k+1}\tfrac{p}t\II$ and $\Dev\NN_h = 0$.
  Thus, we end up with
  \begin{align*}
    \enorm{(\uu-\vv_h,\MM-\NN_h)}_t^2 &= t^2\norm{\Div(1-\Pi_{\cS}^{k+1})\tfrac{p}t\II}{}^2 + t^2\norm{\tr(1-\Pi_{\cS}^{k+1})\tfrac{p}t\II}{}^2 
    \\
    &\lesssim \norm{\nabla(1-\Pi_{\cS}^{k+1})p}{}^2 + \norm{(1-\Pi_{\cS}^{k+1})p}{}^2 
    \\&\lesssim h^{2(k+1)}\norm{p}{H^{k+2}(\Omega)}^2,
  \end{align*}
  where the involved constants are independent of $t\in(0,1]$.
\end{example}

\section{Numerical Examples}\label{sec:num}
In this section we present different numerical examples for $d=2$ and $k=0$.
Note that on uniform shape-regular meshes we have that $h\eqsim \diam(T)$ for all $T\in \TT$ and $\#\TT \eqsim h^{-2}$.
Therefore, the optimal convergence in the energy norm is $\OO(h) = \OO( (\#\TT)^{-1/2}) = \OO( (\dim X_h)^{-1/2})$.

For the experiments from Section~\ref{sec:num:layer} and Section~\ref{sec:num:Lshape} we also employ a basic adaptive algorithm with the four steps
\begin{align*}
  \boxed{\mathrm{Solve}} \Longrightarrow \boxed{\mathrm{Estimate}} \Longrightarrow \boxed{\mathrm{Mark}} \Longrightarrow \boxed{\mathrm{Refine}}.
\end{align*}
As mesh-refinement we use the newest-vertex bisection algorithm. Elements are marked for refinement using D\"orfler's bulk criterion: Find a minimal set $\mathcal{M}\subset\TT$ such that
\begin{align*}
  \theta \eta^2 \leq \sum_{T\in\mathcal{M}} \eta(T)^2. 
\end{align*}
Throughout, we use $\theta = \tfrac14$ as marking parameter.
Here, $\eta$ denotes the a posteriori error estimator given by the natural error estimator of the least-squares method, i.e., 
$J(\uu_h,\MM_h;\ff) =: \eta^2 = \sum_{T\in\TT} \eta(T)^2$ and the local error indicators are given on any $T\in\TT$ by
\begin{align*}
  \eta(T)^2 := \norm{-t\Div\MM_h+\uu_h-\ff}{T}^2 + \norm{\Dev\MM_h-t\Grad\uu_h}T^2 + \norm{\div\uu_h}T^2.
\end{align*}

Furthermore, we use the abbreviations
\begin{alignat*}{2}
  \err(\uu_h)^2 &:= \norm{\uu-\uu_h}{t}^2 &\,=\,& \norm{\uu-\uu_h}{}^2 + t^2\norm{\Grad(\uu-\uu_h)}{}^2 
  \\& &\,&\qquad + \norm{\div\uu_h}{}^2, \\
  \err(\MM_h)^2 &:= \norm{\MM-\MM_h}{t}^2 &=& \norm{\Dev(\MM-\MM_h)}{}^2 + t^2 \norm{\tr(\MM-\MM_h)}{}^2 
  \\ & &\,& \qquad +t^2\norm{\div(\MM-\MM_h)}{}^2
\end{alignat*}
for the errors between the exact solution $(\uu,\MM)$ and its least-squares approximation $(\uu_h,\MM_h)$.

\subsection{Minimization over $X_h$ and $X_h^+$}\label{sec:num:comp}
We consider the problem from Example~\ref{ex:convrates} with $\Omega = (0,1)^2$, $\uu = 0$, $p(x,y) = x^2-\tfrac13$ and $\MM =
-t^{-1}p\II$. Then, the right-hand side is given by $\ff = \nabla p = (2x,0)^\top$.
We compare the minimizers of the functional $J(\vv,\NN;\ff)$ over the space $X_h$ and $X_h^+$ for different values of
$t\in(0,1]$.
Figure~\ref{fig:min} shows the values of $J(\uu_h,\MM_h;\ff)^{1/2}$ where $(\uu_h,\MM_h)$ is either the minimizer in
$X_h$ (left plot) or the minimizer in $X_h^+$ (right plot). Recall that (Corollary~\ref{cor:lsq:J})
\begin{align*}
  J(\uu_h,\MM_h;\ff) \simeq \enorm{(\uu-\uu_h,\MM-\MM_h)}_t^2.
\end{align*}
One observes that for smaller values of $t$ a locking effect occurs when minimizing over the set $X_h$ whereas
minimization over $X_h^+$ leads to optimal convergence rates.
This fits the observations from Examples~\ref{ex:convrates} and~\ref{ex:convrates2}.

\begin{figure}
  \begin{tikzpicture}
\begin{loglogaxis}[
    title={Minimization over $X_h$},
width=0.49\textwidth,
cycle list/Dark2-6,
cycle multiindex* list={
mark list*\nextlist
Dark2-6\nextlist},
every axis plot/.append style={ultra thick},
xlabel={degrees of freedom},
grid=major,
legend entries={\tiny $t=1$,\tiny $t=10^{-1}$, \tiny $t=10^{-2}$, \tiny $t=10^{-3}$},
legend pos=south west,
]
\addplot table [x=dof,y=est] {data/Example1_1000.dat};
\addplot table [x=dof,y=est] {data/Example1_100.dat};
\addplot table [x=dof,y=est] {data/Example1_10.dat};
\addplot table [x=dof,y=est] {data/Example1_1.dat};

\logLogSlopeTriangle{0.9}{0.2}{0.37}{0.5}{black}{{\tiny $\tfrac12$}};
\end{loglogaxis}
\end{tikzpicture}
\begin{tikzpicture}
\begin{loglogaxis}[
    title={Minimization over $X_h^+$},
width=0.49\textwidth,
cycle list/Dark2-6,
cycle multiindex* list={
mark list*\nextlist
Dark2-6\nextlist},
every axis plot/.append style={ultra thick},
xlabel={degrees of freedom},
grid=major,
legend entries={\tiny $t=1$,\tiny $t=10^{-1}$, \tiny $t=10^{-2}$, \tiny $t=10^{-3}$},
legend pos=south west,
]
\addplot table [x=dof,y=est] {data/Example1_Alt_1000.dat};
\addplot table [x=dof,y=est] {data/Example1_Alt_100.dat};
\addplot table [x=dof,y=est] {data/Example1_Alt_10.dat};
\addplot table [x=dof,y=est] {data/Example1_Alt_1.dat};

\logLogSlopeTriangle{0.9}{0.2}{0.2}{0.5}{black}{{\tiny $\tfrac12$}};
\end{loglogaxis}
\end{tikzpicture}
  \caption{Estimator $J(\uu_h,\MM_h,p_h;\ff)$ for the problem described in Section~\ref{sec:num:comp}.}
  \label{fig:min}
\end{figure}

\subsection{Example with known solution}\label{sec:num:layer}
We consider $\Omega = (0,1)^2$ and the manufactured solution
\begin{align*}
  p(x,y) &= 0, \\
  \uu_1(x,y) &= \frac{1+e^{1/t}-e^{y/t}-e^{(1-y)/t}}{1+e^{1/t}},  \\
  \uu_2(x,y) &= 0.
\end{align*}
Then, $\ff = -t^2\boldsymbol{\Delta}\uu+\uu+\nabla p = (1,0)^\top$. 
We note that $\uu$ does not satisfy homogengeous boundary conditions at $x=0$ and $x=1$ and has a prototypical boundary layer close to $y=0$ and $y=1$.
A similar example has been considered in~\cite[Sec.~4]{HannukainenJuntunenStenberg13}. It is Poiseuille flow in the $x$-direction.
We include boundary conditions in a canonical way by defining $\uu_{h,\Gamma} \in \cS^1(\TT)^2$ as $\uu_{h,\Gamma}(\zz) = \uu(\zz)$ at all boundary vertices and $\uu_{h,\Gamma}(\zz) = 0$ for all interior vertices. Then, writing $\uu_h = \uu_{h,0} + \uu_{h,\Gamma}\in \cS^1(\TT)^2$ we solve for $\uu_{h,0}$ which satisfies the homogeneous boundary conditions.
Figure~\ref{fig:example2} shows the results for $t=5\cdot10^{-2}, 5\cdot10^{-3}$ on a sequence of uniformly as well as adaptively refined meshes. 
The adaptive algorithm seems to detect the boundary layer and its preasymptotic range is left earlier than with the uniformly refined meshes. This effect depends on $t$ and is seen more clearly with smaller $t$.

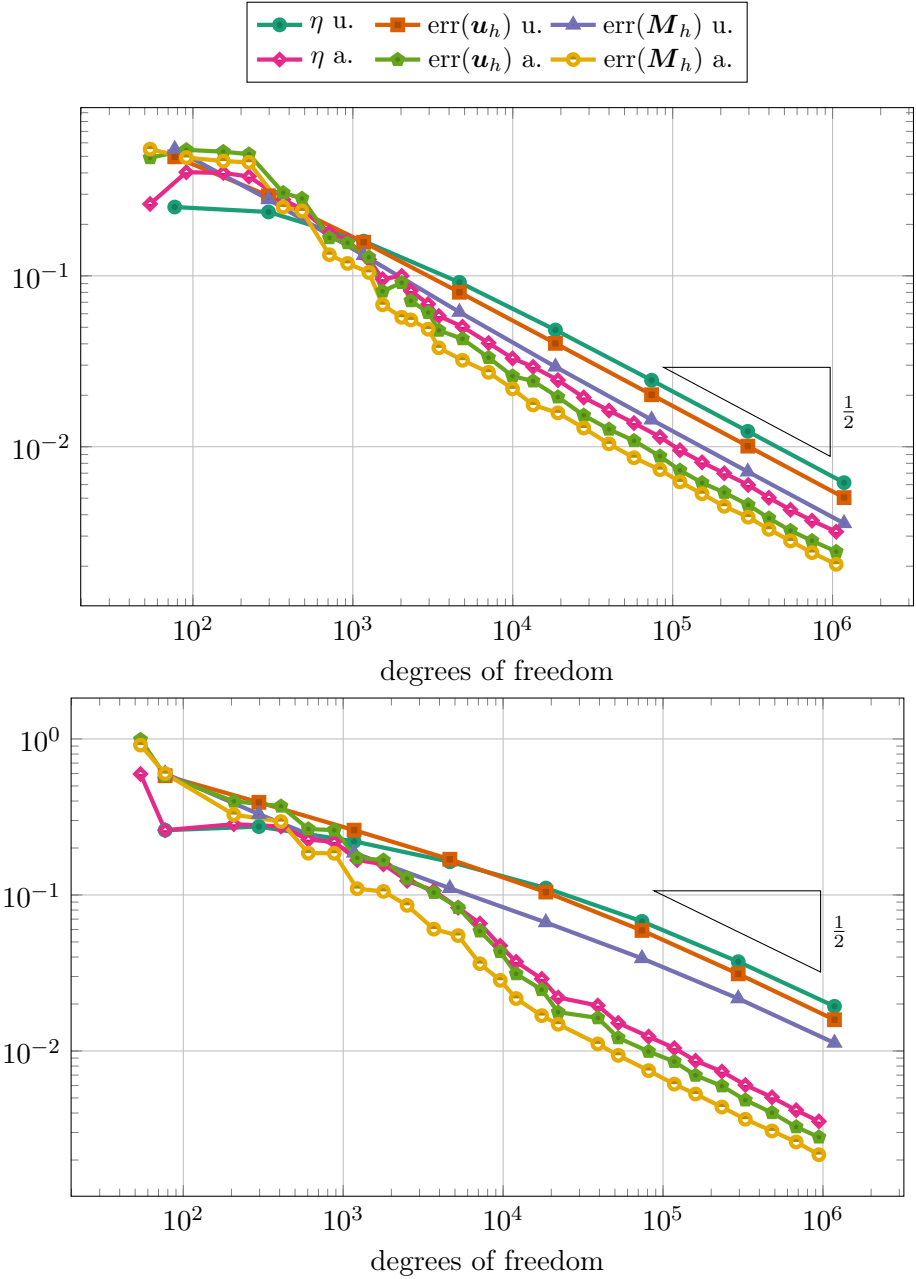
\begin{figure}
  \begin{tikzpicture}
\begin{loglogaxis}[
width=\textwidth,
height=0.4\textheight,
cycle list/Dark2-6,
cycle multiindex* list={
mark list*\nextlist
Dark2-6\nextlist},
every axis plot/.append style={ultra thick},
xlabel={degrees of freedom},
grid=major,
legend entries={\small $\eta$ u.,\small $\err(\uu_h)$ u., \small $\err(\MM_h)$ u.,\small $\eta$ a.,\small $\err(\uu_h)$ a., \small $\err(\MM_h)$ a.},
legend style={at={(0.5,1.05)},anchor=south},
legend columns=3, 
        legend style={
            /tikz/column 2/.style={
                column sep=5pt,
            }},
]
\addplot table [x=dof,y=est] {data/Example2_Unif_5e-02.dat};
\addplot table [x=dof,y=errU] {data/Example2_Unif_5e-02.dat};
\addplot table [x=dof,y=errM] {data/Example2_Unif_5e-02.dat};
\addplot table [x=dof,y=est] {data/Example2_Adap_5e-02.dat};
\addplot table [x=dof,y=errU] {data/Example2_Adap_5e-02.dat};
\addplot table [x=dof,y=errM] {data/Example2_Adap_5e-02.dat};

\logLogSlopeTriangle{0.9}{0.2}{0.3}{0.5}{black}{{\small $\tfrac12$}};
\end{loglogaxis}
\end{tikzpicture}
\begin{tikzpicture}
\begin{loglogaxis}[
width=\textwidth,
height=0.4\textheight,
cycle list/Dark2-6,
cycle multiindex* list={
mark list*\nextlist
Dark2-6\nextlist},
every axis plot/.append style={ultra thick},
xlabel={degrees of freedom},
grid=major,
]
\addplot table [x=dof,y=est] {data/Example2_Unif_5e-03.dat};
\addplot table [x=dof,y=errU] {data/Example2_Unif_5e-03.dat};
\addplot table [x=dof,y=errM] {data/Example2_Unif_5e-03.dat};
\addplot table [x=dof,y=est] {data/Example2_Adap_5e-03.dat};
\addplot table [x=dof,y=errU] {data/Example2_Adap_5e-03.dat};
\addplot table [x=dof,y=errM] {data/Example2_Adap_5e-03.dat};

\logLogSlopeTriangle{0.9}{0.2}{0.45}{0.5}{black}{{\small $\tfrac12$}};
\end{loglogaxis}
\end{tikzpicture}
  \caption{Estimator $\eta$ and errors $\norm{\uu-\uu_h}t$, $\norm{\MM-\MM_h}{t}$ for the problem described in Section~\ref{sec:num:layer} on a sequence of uniform (u.) resp. adaptively (a.) refined meshes.
  The top plot corresponds to the results for $t=5\cdot10^{-2}$ and the bottom plot to $t=5\cdot 10^{-3}$.}
  \label{fig:example2}
\end{figure}

\subsection{Example in non-convex domain}\label{sec:num:Lshape}
We consider the L-shaped domain $\Omega = (-1,1)^2\setminus[-1,0]^2$ and 
\begin{align*}
  \ff(x,y) = \begin{pmatrix} xy \\ e^x \end{pmatrix}.
\end{align*}
The explicit representation for the solution $(\uu,p)$ of~\eqref{eq:model} is not known. 
Figure~\ref{fig:Lshape} shows the results obtained for a sequence of uniformly and adaptively refined meshes. 
The quantities $\eta:=J(\uu_h,\MM_h;\ff)^{1/2}$ as well as $\norm{\div\uu_h}{}$ are presented. One observes that the adaptive algorithm yields superior results when $t$ becomes smaller.

\begin{figure}
  \begin{tikzpicture}
\begin{loglogaxis}[
width=0.49\textwidth,
cycle list/Dark2-6,
cycle multiindex* list={
mark list*\nextlist
Dark2-6\nextlist},
every axis plot/.append style={ultra thick},
xlabel={degrees of freedom},
grid=major,
legend entries={\tiny $\eta$ u.,\tiny $\norm{\div\uu_h}{}$ u.,\tiny $\eta$ a.,\tiny $\norm{\div\uu_h}{}$ a.},
legend style={at={(0.5,1.05)},anchor=south},
legend columns=2, 
        legend style={
            /tikz/column 2/.style={
                column sep=5pt,
            }},
]
\addplot table [x=dof,y=est] {data/Example5_Unif_1e+00.dat};
\addplot table [x=dof,y=normDivU] {data/Example5_Unif_1e+00.dat};
\addplot table [x=dof,y=est] {data/Example5_Adap_1e+00.dat};
\addplot table [x=dof,y=normDivU] {data/Example5_Adap_1e+00.dat};

\end{loglogaxis}
\end{tikzpicture}
\begin{tikzpicture}
\begin{loglogaxis}[
width=0.49\textwidth,
cycle list/Dark2-6,
cycle multiindex* list={
mark list*\nextlist
Dark2-6\nextlist},
every axis plot/.append style={ultra thick},
xlabel={degrees of freedom},
grid=major,
legend entries={\tiny $\eta$ u.,\tiny $\norm{\div\uu_h}{}$ u.,\tiny $\eta$ a.,\tiny $\norm{\div\uu_h}{}$ a.},
legend style={at={(0.5,1.05)},anchor=south},
legend columns=2, 
        legend style={
            /tikz/column 2/.style={
                column sep=5pt,
            }},
]
\addplot table [x=dof,y=est] {data/Example5_Unif_1e-01.dat};
\addplot table [x=dof,y=normDivU] {data/Example5_Unif_1e-01.dat};
\addplot table [x=dof,y=est] {data/Example5_Adap_1e-01.dat};
\addplot table [x=dof,y=normDivU] {data/Example5_Adap_1e-01.dat};

\end{loglogaxis}
\end{tikzpicture}
\begin{tikzpicture}
\begin{loglogaxis}[
width=0.49\textwidth,
cycle list/Dark2-6,
cycle multiindex* list={
mark list*\nextlist
Dark2-6\nextlist},
every axis plot/.append style={ultra thick},
xlabel={degrees of freedom},
grid=major,
legend entries={\tiny $\eta$ u.,\tiny $\norm{\div\uu_h}{}$ u.,\tiny $\eta$ a.,\tiny $\norm{\div\uu_h}{}$ a.},
legend style={at={(0.5,1.05)},anchor=south},
legend columns=2, 
        legend style={
            /tikz/column 2/.style={
                column sep=5pt,
            }},
]
\addplot table [x=dof,y=est] {data/Example5_Unif_1e-02.dat};
\addplot table [x=dof,y=normDivU] {data/Example5_Unif_1e-02.dat};
\addplot table [x=dof,y=est] {data/Example5_Adap_1e-02.dat};
\addplot table [x=dof,y=normDivU] {data/Example5_Adap_1e-02.dat};

\end{loglogaxis}
\end{tikzpicture}
\begin{tikzpicture}
\begin{loglogaxis}[
width=0.49\textwidth,
cycle list/Dark2-6,
cycle multiindex* list={
mark list*\nextlist
Dark2-6\nextlist},
every axis plot/.append style={ultra thick},
xlabel={degrees of freedom},
grid=major,
legend entries={\tiny $\eta$ u.,\tiny $\norm{\div\uu_h}{}$ u.,\tiny $\eta$ a.,\tiny $\norm{\div\uu_h}{}$ a.},
legend style={at={(0.5,1.05)},anchor=south},
legend columns=2, 
        legend style={
            /tikz/column 2/.style={
                column sep=5pt,
            }},
]
\addplot table [x=dof,y=est] {data/Example5_Unif_1e-03.dat};
\addplot table [x=dof,y=normDivU] {data/Example5_Unif_1e-03.dat};
\addplot table [x=dof,y=est] {data/Example5_Adap_1e-03.dat};
\addplot table [x=dof,y=normDivU] {data/Example5_Adap_1e-03.dat};

\end{loglogaxis}
\end{tikzpicture}
  \caption{Estimator $\eta$ and error $\norm{\div\uu_h}{}$ on a sequence of uniform (u.) resp. adaptively (a.) refined meshes.
  From top left to bottom right the plots correspond to $t=10^0,10^{-1},10^{-2},10^{-3}$.}
  \label{fig:Lshape}
\end{figure}
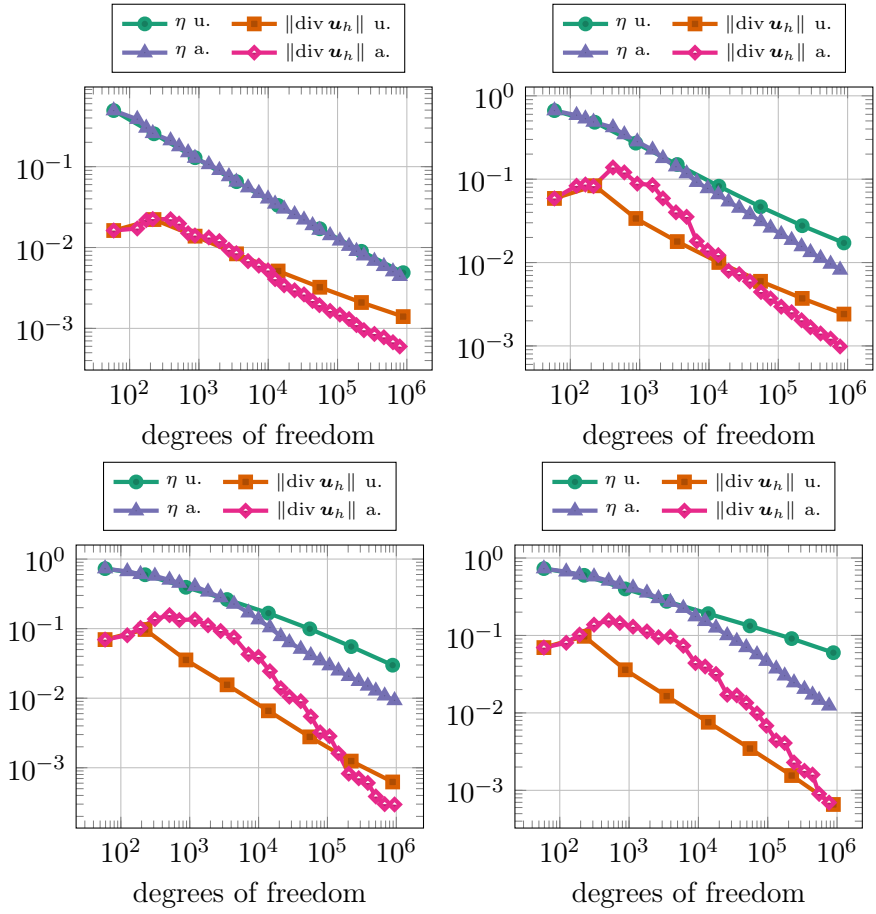

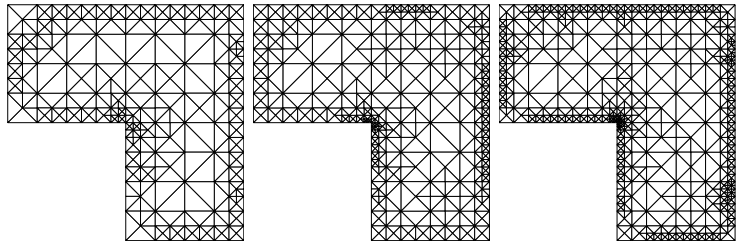
\begin{figure}
  \begin{tikzpicture}
\begin{axis}[hide axis,
width=0.49\textwidth,
    axis equal,
]

\addplot[patch,color=white,
faceted color = black, line width = 0.2pt,
patch table ={data/elements_t1e-02_nE403.dat}] file{data/coordinates_t1e-02_nE403.dat};
\end{axis}
\end{tikzpicture}
\begin{tikzpicture}
\begin{axis}[hide axis,
width=0.49\textwidth,
    axis equal,
]

\addplot[patch,color=white,
faceted color = black, line width = 0.2pt,
patch table ={data/elements_t1e-02_nE615.dat}] file{data/coordinates_t1e-02_nE615.dat};
\end{axis}
\end{tikzpicture}
\begin{tikzpicture}
\begin{axis}[hide axis,
width=0.49\textwidth,
    axis equal,
]

\addplot[patch,color=white,
faceted color = black, line width = 0.2pt,
patch table ={data/elements_t1e-02_nE964.dat}] file{data/coordinates_t1e-02_nE964.dat};
\end{axis}
\end{tikzpicture}
  \caption{Adaptively refined meshes with $t=10^{-2}$. From left to right the meshes contain $403$, $615$, and $964$ elements respectively.}
  \label{fig:Lshape:meshes}
\end{figure}

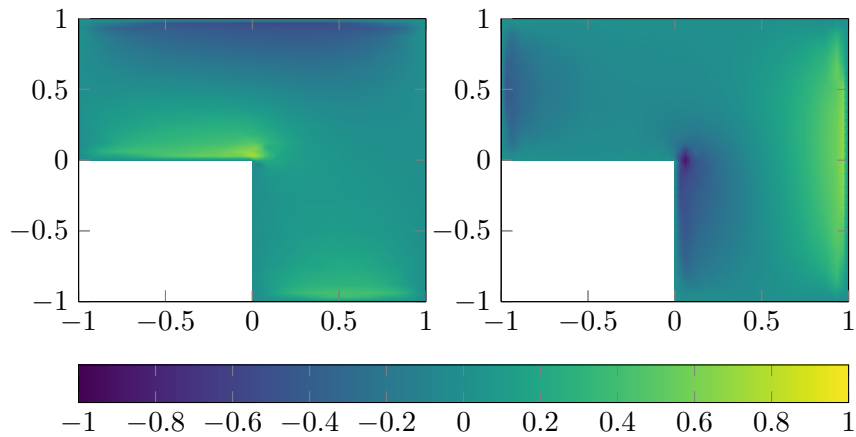
\begin{figure}
  \begin{tikzpicture}
  \begin{groupplot}[
      view={0}{90},
      width=0.49\textwidth,
    point meta min=-1,
    point meta max=1,
    colormap/viridis,
    group style = {group size = 2 by 1
    }]
    \nextgroupplot[colorbar horizontal,
    every colorbar/.append style={width=
        2*\pgfkeysvalueof{/pgfplots/parent axis width}+
        \pgfkeysvalueof{/pgfplots/group/horizontal sep}}]
        \addplot3[hide axis,patch,shader=interp] table{data/sol1_t1e-02.dat};
    \nextgroupplot
    \addplot3[hide axis,patch,shader=interp] table{data/sol2_t1e-02.dat};
  \end{groupplot}
\end{tikzpicture}
  \caption{Solution components of the approximated solution $\uu_h$ with $t=10^{-2}$ on a mesh generated by the adaptive algorithm with $2188$ elements.}
  \label{fig:Lshape:sol}
\end{figure}

\begin{figure}
  \includegraphics[width=0.7\textwidth]{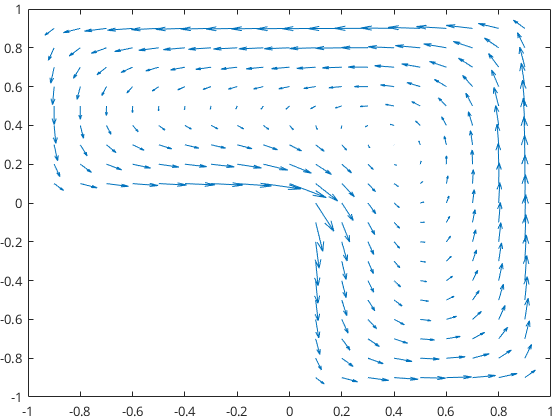}
  \caption{Velocity field in the interior of the L-shaped domain with $t=10^{-2}$.}
  \label{fig:Lshape:vecField}
\end{figure}

Figure~\ref{fig:Lshape:meshes} presents a sequence of meshes generated by the adaptive loop. 
One observes refinements towards the boundary edges as well as towards the reentrant corner. 

Finally, Figure~\ref{fig:Lshape:sol} visualizes the two solution components of the approximated solution $\uu_h$ for the problem described in Section~\ref{sec:num:Lshape} with $t=10^{-2}$ on a mesh generated by the adaptive algorithm with $2188$ elements.
Both solutions seem to have boundary layers on different edges and both components seem to have a singularity at the reentrant corner as expected. These effects are captured by the adaptive algorithm as can also be seen on the generated meshes, see Figure~\ref{fig:Lshape:meshes}. 
Figure~\ref{fig:Lshape:vecField} shows the corresponding velocity field.

\bibliographystyle{abbrv}
\bibliography{literature}

\end{document}